\begin{document}

\newtheorem{tm}{Theorem}[section]
\newtheorem{prop}[tm]{Proposition}
\newtheorem{defin}[tm]{Definition}
\newtheorem{coro}[tm]{Corollary}

\newtheorem{lem}[tm]{Lemma}
\newtheorem{assumption}[tm]{Assumption}

\newtheorem{rk}[tm]{Remark}

\newtheorem{nota}[tm]{Notation}
\numberwithin{equation}{section}

\newcommand{\stk}[2]{\stackrel{#1}{#2}}
\newcommand{\dwn}[1]{{\scriptstyle #1}\downarrow}
\newcommand{\upa}[1]{{\scriptstyle #1}\uparrow}
\newcommand{\nea}[1]{{\scriptstyle #1}\nearrow}
\newcommand{\sea}[1]{\searrow {\scriptstyle #1}}
\newcommand{\csti}[3]{(#1+1) (#2)^{1/ (#1+1)} (#1)^{- #1
 / (#1+1)} (#3)^{ #1 / (#1 +1)}}
\newcommand{\RR}[1]{\mathbb{#1}}

\newcommand{ \bl}{\color{blue}}
\newcommand {\rd}{\color{red}}
\newcommand{ \bk}{\color{black}}
\newcommand{ \gr}{\color{OliveGreen}}
\newcommand{ \mg}{\color{RedViolet}}

\newcommand{\ep}{\varepsilon}
\newcommand{\rr}{{\mathbb R}}
\newcommand{\alert}[1]{\fbox{#1}}

\newcommand{\eqd}{\sim}
\def\R{{\mathbb R}}
\def\N{{\mathbb N}}
\def\Q{{\mathbb Q}}
\def\C{{\mathbb C}}
\def\l{{\langle}}
\def\r{\rangle}
\def\t{\tau}
\def\k{\kappa}
\def\a{\alpha}
\def\la{\lambda}
\def\De{\Delta}
\def\de{\delta}
\def\ga{\gamma}
\def\Ga{\Gamma}
\def\ep{\varepsilon}
\def\eps{\varepsilon}
\def\si{\sigma}
\def\Re {{\rm Re}\,}
\def\Im {{\rm Im}\,}
\def\E{{\mathbb E}}
\def\P{{\mathbb P}}
\def\Z{{\mathbb Z}}
\def\D{{\mathbb D}}
\newcommand{\ceil}[1]{\lceil{#1}\rceil}

\title{Existence of Traveling wave solutions of parabolic-parabolic chemotaxis systems}

\author{
Rachidi B. Salako and Wenxian Shen  \\
Department of Mathematics and Statistics\\
Auburn University\\
Auburn University, AL 36849\\
U.S.A. }

\date{}
\maketitle
\begin{abstract}
\noindent The current paper is devoted to the study of traveling wave solutions of the following parabolic-parabolic chemotaxis system,
\begin{equation*}\label{main-eq-abstract}
\begin{cases}
u_{t}= \Delta u-\chi  \nabla \cdot (u \nabla v) + u(a-bu),\quad x\in\mathbb{R}^N\\
\tau v_t=\Delta v-v+u, \quad x\in\mathbb{R}^N,
\end{cases}
\end{equation*}
where $u(x,t)$ represents the population density of a mobile species and $v(x,t)$ represents the population density of a  chemoattractant, and
$\chi$ represents the chemotaxis sensitivity.

 In  an earlier work (\cite{SaSh2})  by the authors of the current paper,  traveling wave solutions of the above chemotaxis system with $\tau=0$ are
studied. It is shown in \cite{SaSh2} that for every $0<\chi<\frac{b}{2}$,  there is $c^*(\chi)$ such that for every $c>c^*(\chi)$ and $\xi\in S^{N-1}$,  the system
 has a traveling wave solution $(u(x,t),v(x,t))=(U(x\cdot\xi-ct;\tau),V(x\cdot\xi-ct;\tau))$ with speed $c$ connecting the constant solutions $(\frac{a}{b},\frac{a}{b})$ and $(0,0)$. Moreover,
$$
\lim_{\chi\to 0+}c^{*}(\chi)=\begin{cases}
2\sqrt{a}  \qquad\ \text{if} \quad 0<a\leq 1\cr
1+a \qquad \text{if} \qquad a>1.
\end{cases}
$$

We prove in the current paper that for every $\tau >0$, there is $0<\chi_{\tau}^*<\frac{b}{2}$ such that for every $0<\chi<\chi_{\tau}^*$, there exist  two positive numbers $  c^{**}(\chi,\tau)>c^{*}(\chi,\tau)\ge 2\sqrt a$ satisfying that for every $ c\in   ( c^{*}(\chi,\tau)\ ,\ c^{**}(\chi,\tau))$ and $\xi\in S^{N-1}$, the system
 has a traveling wave solution $(u(x,t),v(x,t))=(U(x\cdot\xi-ct;\tau),V(x\cdot\xi-ct;\tau))$ with speed $c$ connecting the constant solutions $(\frac{a}{b},\frac{a}{b})$ and $(0,0)$, and it does not have such traveling wave solutions of speed less than $2\sqrt a$. Moreover,
$$
\lim_{\chi\to 0+}c^{**}(\chi,\tau)=\infty,$$
$$
\lim_{\chi\to 0+}c^{*}(\chi,\tau)=\begin{cases}
2\sqrt{a}\qquad \qquad  \qquad\ \text{if} \quad 0<a\leq \frac{1+\tau a}{(1-\tau)_+}\cr
\frac{1+\tau a}{(1-\tau)_{+}}+\frac{a(1-\tau)_{+}}{1+\tau a}\quad \text{if} \quad a\geq \frac{1+\tau a}{(1-\tau)_+},
\end{cases}
$$
and
$$
\lim_{x\to \infty}\frac{U(x;\tau)}{e^{-\mu x}}=1,
$$
where $\mu$ is the only solution of the equation $\mu+\frac{a}{\mu}=c$ in the interval $(0\ ,\ \min\{\sqrt{a}, \sqrt{\frac{1+\tau a}{(1-\tau)_+}}\})$. Furthermore,
 $$\lim_{\tau\to 0+}\chi_{\tau}^*=\frac{b}{2},\quad \lim_{\tau\to 0+} c^*(\chi;\tau)=c^*(\chi),\quad \lim_{\tau \to 0+}c^{**}(\chi;\tau)=\infty.
$$
\end{abstract}

\medskip
\noindent{\bf Key words.} parabolic-parabolic chemotaxis system,
logistic source, spreading speed, traveling wave solution.

\medskip
\noindent {\bf 2010 Mathematics Subject Classification.}  35B35, 35B40, 35K57, 35Q92, 92C17.

\section{Introduction}
At  the beginning of 1970s, Keller and Segel  (see \cite{KeSe1},
\cite{KeSe2}) introduced systems of partial differential equations of the following form  to model the time
evolution of  the density $u(x,t)$ of a mobile species and the
density $v(x,t)$ of a  chemoattractant,
\begin{equation}\label{IntroEq0}
\begin{cases}
u_{t}=\nabla\cdot (m(u)\nabla u- \chi(u,v)\nabla v) + f(u,v),\quad   x\in\Omega \\
\tau v_t=\Delta v + g(u,v),\quad  x\in\Omega
\end{cases}
\end{equation}
complemented with certain boundary condition on $\partial\Omega$ if $\Omega$ is bounded, where $\Omega\subset \R^N$ is an open domain;  $\tau\ge 0$ is a non-negative constant linked to the speed of diffusion of the chemical;  the function $\chi(u,v)$ represents  the sensitivity with respect to chemotaxis; and the functions $f$ and $g$ model the growth of the mobile species and the chemoattractant, respectively.
In literature, \eqref{IntroEq0} is called the Keller-Segel model or a chemotaxis model.

Since the works by Keller and Segel,  a rich variety of mathematical models for studying chemotaxis has appeared (see \cite{BBTW}, \cite{DiNa}, \cite{DiNaRa}, \cite{GaSaTe}, \cite{KKAS},  \cite{NAGAI_SENBA_YOSHIDA}, \cite{Sug},  \cite{SuKu},  \cite{TeWi},   \cite{WaMuZh},  \cite{win_jde}, \cite{win_JMAA_veryweak}, \cite{win_arxiv}, \cite{Win}, \cite{win_JNLS}, \cite{YoYo}, \cite{ZhMuHuTi}, and the references therein). In the current paper and
the further coming papers, we will investigate various dynamical aspects of the following parabolic-parabolic Keller-Segel systems,
\begin{equation}\label{main-eq0}
\begin{cases}
u_{t}=\Delta u- \chi\nabla (u\nabla v) + u(a-bu),\quad  x\in\R^N \\
\tau v_t=\Delta v -v+u,\quad  x\in\R^N.
\end{cases}
\end{equation}

Note that, in the absence of the chemotaxis (i.e. $\chi=0$), the first equation of \eqref{main-eq0} becomes
\begin{equation}
\label{kpp-eq}
u_{t}=\Delta u+ u(a-bu),\quad  x\in\R^N,
\end{equation}
which is referred to as Fisher or KPP equation  due to  the pioneering works by Fisher (\cite{Fis}) and Kolmogorov, Petrowsky, Piscunov
(\cite{KPP}).  Among important solutions of \eqref{kpp-eq}
are traveling wave solutions of \eqref{kpp-eq} connecting the constant solutions $a/b$ and $0$.
It is well known that \eqref{kpp-eq} has traveling wave solutions $u(t,x)=\phi(x-ct)$ connecting $\frac{a}{b}$ and $0$ (i.e.
$(\phi(-\infty)=a/b$, $\phi(\infty)=0)$) for all speeds $c\geq 2\sqrt{a}$ and has no such traveling wave
solutions of slower speed (see \cite{Fis, KPP, Wei1}). Moreover, the stability of traveling wave solutions of \eqref{kpp-eq} connecting $\frac{a}{b}$ and $0$ has also been studied (see \cite{Bra}, \cite{Sat}, \cite{Uch}, etc.).
The above mentioned results for \eqref{kpp-eq} have also been
well extended to   reaction diffusion equations of the form,
\begin{equation}
\label{general-kpp-eq}
u_t=\Delta u+u f(t,x,u),\quad x\in\R^N,
\end{equation}
where $f(t,x,u)<0$ for $u\gg 1$,  $\partial_u f(t,x,u)<0$ for $u\ge 0$ (see \cite{Berestycki1, BeHaNa1, BeHaNa2, Henri1, Fre, FrGa, LiZh, LiZh1, Nad, NoRuXi, NoXi1, She1, She2, Wei1, Wei2, Zla}, etc.).

Similar to \eqref{kpp-eq}, traveling wave solutions  connecting the constant solutions $(a/b,a/b)$ and $(0,0)$ are among most important solutions of \eqref{main-eq0}. However, such solutions have been hardly studied. The objective of the current paper is to study
 the existence of traveling wave solutions connecting $(a/b,a/b)$ and $(0,0)$.
A nonnegative solution $(u(x,t),v(x,t))$ of \eqref{main-eq0} is called a {\it traveling wave solution} connecting $(a/b,a/b)$ and $(0,0)$ and propagating
in the direction $\xi\in S^{N-1}$ with speed $c$ if it is of the form
$(u(x,t),v(x,t))=(U(x\cdot\xi-ct),V(x\cdot\xi-ct))$ with
$\lim_{z\to -\infty}(U(z),V(z))=(a/b,a/b)$ and $\lim_{z\to\infty}(U(z),V(z))=(0,0)$.

Observe that,  if $(u(x,t),v(x,t))=(U(x\cdot\xi-ct),V(x\cdot\xi-ct))$  $(x\in\R^N, \ t\ge 0)$ is a traveling wave solution of \eqref{main-eq0} connecting  $(a/b,a/b)$ and $(0,0)$ and propagating
in the direction $\xi\in S^{N-1}$, then $(u,v)=(U(x-ct),V(x-ct))$ ($x\in\R$)
is a traveling wave solution of
\begin{equation}
\label{main-eq1}
\begin{cases}
u_{t}=u_{xx} -\chi (u v_x)_x + u(a-bu),\quad x\in\R\\
\tau v_t=v_{xx}-v+u, \quad x\in\R
\end{cases}
\end{equation}
connecting  $(a/b,a/b)$ and $(0,0)$. Conversely, if $(u(x,t),v(x,t))=(U(x-ct),V(x-ct))$ ($x\in\R,\ t\geq 0$) is a traveling wave solution
of \eqref{main-eq1} connecting $(a/b,a/b)$ and $(0,0)$, then $(u,v)=(U(x\cdot\xi-ct),V(x\cdot\xi-ct))$  $(x\in\R^N)$ is a traveling wave solution of \eqref{main-eq0} connecting  $(a/b,a/b)$ and $(0,0)$ and propagating
in the direction $\xi\in S^{N-1}$. In the following, we will then study the existence of traveling wave solutions
of \eqref{main-eq1} connecting $(a/b,a/b)$ and $(0,0)$.

Observe also that,  $(u,v)=(U(x-ct),V(x-ct))$  is a traveling wave solution of \eqref{main-eq1} connecting $(a/b,a/b)$ and $(0,0)$  if and only if $(u,v)=(U(x),V(x))$ is a stationary solution of the following parabolic-parabolic chemotaxis system,
\begin{equation}
\label{reduced-eq1}
\begin{cases}
u_{t}=u_{xx}+cu_x -\chi (u v_x)_x + u(a-bu),\quad x\in\R\\
\tau v_t=v_{xx}+\tau c v_x-v+u, \quad x\in\R,
\end{cases}
\end{equation}
and $(u,v)=(U(x),V(x))$ is also a stationary solution of the following parabolic-elliptic chemotaxis system,
\begin{equation}
\label{reduced-eq2}
\begin{cases}
u_{t}=u_{xx}+cu_x -\chi (u v_x)_x + u(a-bu),\quad x\in\R\\
0=v_{xx}+\tau c v_x-v+u, \quad x\in\R.
\end{cases}
\end{equation}
Clearly, \eqref{reduced-eq1} and \eqref{reduced-eq2} have same stationary solutions. In this paper, to study the existence of traveling
wave solutions of \eqref{main-eq1}, we study the existence of constant $c$'s so that \eqref{reduced-eq2} has a stationary solution $(U(x),V(x))$ with
$(U(-\infty),V(-\infty))=(a/b,a/b)$ and $(U(\infty),V(\infty))=(0,0)$.

To this end, we first prove the following two theorems on the global
existence of classical solutions of \eqref{reduced-eq2} and the
stability of constant solution $(a/b,a/b)$, which are of independent
interest.  Let
$$
C_{\rm unif}^b(\R)=\{u\in C(\R)\,|\, u(x)\quad \text{is uniformly continuous in}\,\,\, x\in\R\quad \text{and}\,\, \sup_{x\in\R}|u(x)|<\infty\}
$$
equipped with the norm $\|u\|_\infty=\sup_{x\in\R}|u(x)|$. For any $0\le \nu<1$, let
$$
C_{\rm unif}^{b,\nu}=\{u\in C_{\rm unif}^b\,|\, \sup_{x,y\in\R,x\not = y}\frac{|u(x)-u(y)|}{|x-y|^\nu}<\infty\}
$$
with norm $\|u\|_{C_{\rm unif}^{b,\nu}}=\sup_{x\in\R}|u(x)|+\sup_{x,y\in\R,x\not =y}\frac{|u(x)-u(y)|}{|x-y|^\nu}$.

Note that, for fixed $c$, it can be proved by the similar
arguments as those in \cite{SaSh} that for any $u_0\in C_{\rm
unif}^b(\R)$ with $u_0\ge 0$, there is $T_{\max}(u_0)\in (0,\infty]$
such that \eqref{reduced-eq2} has a unique classical solution
$(u(x,t;u_0),v(x,t;u_0))$ on $[0,T_{\max}(u_0))$ with
$u(x,0;u_0)=u_0(x)$. Furthermore, if $T_{\max}(u_0)<\infty$, then $\lim_{t\to T_{\max}(u_0)^-}\|u(\cdot,t;u_0)\|_{\infty}=\infty$.

\medskip
\noindent{\bf Theorem A.}
{\it Assume that $0\le \frac{\chi \tau c}{2}{ < }{b-\chi}$. Then
 for any  $u_0\in C_{\rm unif}^b(\R)$ with $0\le u_0$,  $T_{\max}(u_0)=\infty$.  Moreover,  the solution $(u(\cdot,\cdot;u_0),v(\cdot,\cdot;v_0))$ satisfies that
 \begin{equation}\label{global-exixst-thm-eq1}
 \|u(\cdot,t;u_0)\|_{\infty}\leq \max\{\|u_0\|_{\infty},\frac{a}{b-\chi-\frac{\chi c\tau}{2}}\}\end{equation}\quad \text{and}\quad\begin{equation}\label{global-exixst-thm-eq2}
\|v(\cdot,t;u_0)\|_{\infty}\leq\max\{\|u_0\|_{\infty}\ ,\ \frac{a}{b-\chi -\frac{\chi c\tau}{2}}\}
 \end{equation}
 for every $t\geq 0$.
}

\medskip

\noindent {\bf Theorem B.}
{\it Assume that { $0\leq \chi\tau c<b-2\chi$}. Then for any $u_0\in C_{\rm unif}^b(\R)$ with $\inf_{x\in\R}u_0(x)>0$,
$$
\lim_{t\to\infty}\Big[\|u(\cdot,t;u_0)-\frac{a}{b}\|_\infty+\|v(\cdot,t;u_0)-\frac{a}{b}\|_\infty\Big]=0.
$$
}

\medskip

\noindent {\bf Remark 1.} Theorem A in the case $c=0$ recovers \cite[Theorem 1.5]{SaSh} in the case $b>\chi$
and Theorem B in the case $c=0$ recovers \cite[Theorem 1.8]{SaSh}.

\medskip

 Next, we consider the existence and nonexistence of traveling
wave solutions of \eqref{main-eq1}. Note that, when $\tau=0$, the
following result is proved in \cite{SaSh2}.

\begin{itemize}
\item[$\bullet$] {\it  For any given $0<\chi<\frac{b}{2}$, let
$\mu^*(\chi)$ be defined by
$$
\mu^*(\chi)=\sup\{\mu\,|\, 0<\mu<\min\{1,\sqrt a\},\,\,\,  \frac{\mu (\mu+\sqrt {1-\mu^2})}{1-\mu^2}\le \frac{b-\chi}{\chi}\}.
$$
Let
$$
c^*(\chi)=\mu^*(\chi)+\frac{a}{\mu^*(\chi)}.
$$
Then for any $c> c^*(\chi)$, \eqref{main-eq1} with $\tau=0$ has a traveling wave solution $(u,v)=(U(x-ct;\tau),V(x-ct;\tau))$ with speed $c$ connecting the constant solutions $(\frac{a}{b},\frac{a}{b})$ and $(0,0)$. Moreover,
$$
\lim_{\chi\to 0+}c^{*}(\chi)=\begin{cases}
2\sqrt{a}\qquad \qquad  \qquad\ \text{if} \quad 0<a\leq 1\cr
1+a\qquad\qquad\qquad \text{if} \quad a>1.
\end{cases}
$$
}
\end{itemize}

In this paper, we prove the following theorems on the existence and nonexistence of traveling wave solutions of \eqref{main-eq1} with $\tau>0$.

\medskip

\noindent{\bf Theorem C.}
{\it  For every $\tau >0$, there is $0<\chi_{\tau}^*<\frac{b}{2}$ such that for every $0<\chi<\chi_{\tau}^*$, there exist  two positive numbers $0< c^{*}(\chi,\tau)<c^{**}(\chi,\tau)$ satisfying that for every $ c\in   ( c^{*}(\chi,\tau)\ ,\ c^{**}(\chi,\tau))$, \eqref{main-eq1} has a traveling wave solution $(u,v)=(U(x-ct;\tau),V(x-ct;\tau))$ with speed $c$ connecting the constant solutions $(\frac{a}{b},\frac{a}{b})$ and $(0,0)$. Moreover,
$$
\lim_{\chi\to 0+}c^{**}(\chi,\tau)=\infty,$$
$$
\lim_{\chi\to 0+}c^{*}(\chi,\tau)=\begin{cases}
2\sqrt{a}\qquad \qquad  \qquad\ \text{if} \quad 0<a\leq \frac{1+\tau a}{(1-\tau)_+}\cr
\frac{1+\tau a}{(1-\tau)_{+}}+\frac{a(1-\tau)_{+}}{1+\tau a}\quad \text{if} \quad a\geq \frac{1+\tau a}{(1-\tau)_+},
\end{cases}
$$
and
$$
\lim_{x\to \infty}\frac{U(x;\tau)}{e^{-\mu x}}=1,
$$
where $\mu$ is the only solution of the equation $\mu+\frac{a}{\mu}=c$ in the interval $(0\ ,\ \min\{\sqrt{a}, \sqrt{\frac{1+\tau a}{(1-\tau)_+}}\})$.  Furthermore,
$$ \lim_{\tau\to 0+}\chi_{\tau}^*=\frac{b}{2},
$$
and
$$\lim_{\tau \to 0+}c^{*}(\chi,\tau)=c^*(\chi),\quad \lim_{\tau\to 0+}c^{**}(\chi,\tau)=\infty $$
for every $0<\chi<\frac{b}{2}$.}

\medskip

\noindent {\bf Theorem D.} {\it
For any given $\tau\ge 0$ and $\chi\ge 0$,   \eqref{main-eq1} has no traveling wave solution $(u,v)=(U(x-ct;\tau),V(x-ct;\tau))$
with $(U(-\infty),V(-\infty))=(\frac{a}{b},\frac{a}{b})$, $(U(\infty),V(\infty))=(0,0)$, and $c<2 \sqrt a$.
}

\medskip

\noindent{\bf Remark 2.}
\begin{description}

\item[(i)]  By Theorem C, when $0<a\leq \frac{1+\tau a}{(1-\tau)_+}$,  then, as $\chi\to 0+$, $c^{*}(\chi;\tau)$ converges to the minimal speed
 $2\sqrt a$ of \eqref{kpp-eq}. Moreover, if $0<a< \frac{1+\tau a}{(1-\tau)_+}$  and
 $$ 0<\chi<\frac{b}{1+\max\{1+2\tau\sqrt{a}\ ,\ \frac{\sqrt{a}(1+2\tau)}{\sqrt{1+2\tau a-a}}+\frac{a(1+2\tau)}{1+2\tau z-a} \}},$$
    then $c^*(\chi;\tau)=2\sqrt a$ (see Remark \ref{rk-thmc}).

 \item[(ii)]  For given  $\tau>0$ and $\chi\ge 0$,   let $(c_{\min}^*(\chi,\tau), c_{\max}^*(\chi,\tau))$ be the largest subinterval of $(0,\infty)$ such that
  $(c^{*}(\chi,\tau),c^{**}(\chi;\tau))\subset    (c_{\min}^*(\chi,\tau),c_{\max}^*(\chi,\tau))$ and  for   any  $c\in (c_{\min}^*(\chi,\tau),c_{\max}^*(\chi,\tau))$, \eqref{main-eq1} has a traveling wave
 solution connecting $(a/b,a/b)$ and $(0,0)$ with speed $c$.
 By Theorems C and D, for $0<\chi<\chi_\tau^*$,
 $$
 2\sqrt a\le c_{\min}^*(\chi,\tau)\le c^*(\chi,\tau)<c^{**}(\chi,\tau)\le c_{\max}^*(\chi,\tau).
 $$
 It   remains also open whether $c_{\min}^*(\chi,\tau)=2\sqrt a$ and whether $c_{\max}^*(\chi,\tau)=\infty$. The first question is about whether
 the chemotaxis increases the minimal wave speed and the second question is about whether the chemotaxis prevents the existence of traveling wave
 solutions with large speeds. It is of great theoretical and biological interests to investigate these two questions.
\end{description}

 Because of the lack of comparison principle,  the proofs of Theorems C is highly non trivial. Note that very few results are known about the dynamics of solutions of \eqref{main-eq0} when $a>0,\ b>0$ and $ \tau >0$ for initial data which are bounded and uniformly continuous.  To our best knowledge, there is no existing results on the stability of the trivial solution $(\frac{a}{b},\frac{a}{b})$ of \eqref{main-eq0} when $\tau>0$. This makes the study of traveling wave solutions of \eqref{main-eq0} more complicated.

 Our first key step toward the proof Theorem C is the relationship between traveling wave solutions of \eqref{main-eq1} and stationary solutions of  \eqref{reduced-eq2}. We have that $(u,v)=(U(x-ct),V(x-ct))$ is traveling wave solution of \eqref{main-eq1} connecting $(a/b,a/b)$ and $(0,0)$
  if and only if $(u,v)=(U(x),V(x))$ is a stationary solution of \eqref{reduced-eq2} connecting $(a/b,a/b)$ and $(0,0)$. This  observation leads to the study of the dynamics of solutions to \eqref{reduced-eq2}. Concerning \eqref{reduced-eq2}, we first establish Theorem A, which provides, for given $c$, sufficient conditions for the existence global classical solutions.  Next, we establish Theorem B, which provides, for given $c$,  sufficient conditions for the stability of the constant solution $(\frac{a}{b},\frac{a}{b})$ with respect to positive perturbations for \eqref{reduced-eq2}.

 The proof of Theorem C involves the proof of the existence of $c$ such that   \eqref{reduced-eq2} has stationary solutions connecting $(a/b,a/b)$ and $(0,0)$. The proof of which is based on the construction of a bounded convex non-empty subset of $C_{\rm unif}^{b}(\R)$, called $\mathcal{E}_{\mu,\tau}(C_0)$ (see \eqref{definition-E-mu}), and a continuous and compact function  $U : \mathcal{E}_{\mu,\tau}(C_0)\to \mathcal{E}_{\tau,\mu}(C_0)$. Any fixed point of this function, whose existence is guaranteed by the Schauder's fixed theorem,  becomes a stationary solution of  \eqref{reduced-eq2} connecting $(a/b,a/b)$ and $(0,0)$.
 The construction of the set $\mathcal{E}_{\tau,\mu}(C_0)$ itself is also based on the construction of two special functions. These two special functions are sub-solution and sup-solution of a collection of parabolic equations.
The proof of Theorem D utilizes some principal eigenvalue theory for elliptic equations.

In our future works, we plan to study local/global existence of classical solutions, asymptotic behaviors,  and spatial spreading speeds of classical solutions
of \eqref{main-eq0} with $\tau>0$ for nonnegative, bounded and uniformly continuous initial. These questions are very important in the understanding of dynamics of \eqref{main-eq0}.
When $\tau=0$, we refer the reader to \cite{SaSh} and the references therein.

It should be mentioned that there are  several studies of \eqref{main-eq0} on bounded domains (see \cite{Win} and the references therein). When $\Omega$ is
 a convex bounded domain, it is known that when the logistic damping coefficient is large enough, then the constant solution $(\frac{a}{b},\frac{a}{b})$ is stable (see \cite{Win}). It should be also  pointed out that there are many studies on
traveling wave solutions of several other types of chemotaxis
models, see, for example, \cite{AiHuWa, AiWa, FuMiTs, HoSt, LiLiWa,
MaNoSh, NaPeRy,Wan}, etc. In particular, the reader is referred to
the review paper \cite{Wan}.

The rest of this paper is organized as follows. Section 2 is to study  the dynamics of the induced parabolic-elliptic chemotaxis system \eqref{reduced-eq2} and prove Theorems A and B. Section 3 is to establish  the tools that will be needed in the proof of Theorem C. It is here that we define the two special functions, which are sub-solution and sup-solution of a collection of parabolic equations,  and the non-empty bounded and convex subset $\mathcal{E}_{\tau,\mu}(C_0)$. In section 4, we study  the existence and nonexistence of traveling wave solutions and prove Theorems C and D.

\section{Dynamics of the  induced parabolic-elliptic chemotaxis system}

In this section, we study the global existence of classical solutions of \eqref{reduced-eq2} with given nonnegative initial functions
and the stability of the constant solution $(a/b,a/b)$ of \eqref{reduced-eq2}, and prove Theorems A and B.


For fixed $c$, it can be proved by the similar arguments as those in \cite{SaSh} that for any $u_0\in C_{\rm unif}^b(\R)$ with $u_0\ge 0$, there is
$T_{\max}(u_0)\in (0,\infty]$ such that \eqref{reduced-eq2}
has a unique classical solution $(u(x,t;u_0),v(x,t;u_0))$ on $[0,T_{\max}(u_0))$ with
$u(x,0;u_0)=u_0(x)$.

 Observe that, \eqref{reduced-eq2} is equivalent to
\begin{equation}
\label{stationary-eq}
\begin{cases}
u_{t}=u_{xx}+(c -\chi v_{x})u_{x} + u(a-\chi( v-\tau cv_{x})-(b-\chi)u),\quad x\in\R\\
0=v_{xx}+\tau cv_{x}-v+u, \quad x\in\R.
\end{cases}
\end{equation}

\begin{proof}[Proof of Theorem A]
For given $u_0\in C_{\rm unif}^b(\R)$ and $T>0$, choose $C_0>0$ such that $0\le u_0\le C_0$ and $C_0\ge \frac{a}{b-\chi-\frac{\chi\tau c}{2}}$,  and  let
$$
\mathcal{E}(u_0,T):=\{u\in C_{\rm unif}^b(\R\times [0,T])\,|\, u(x,0)=u_0(x),\,\, 0\le u(x,t)\le  C_0\}.
$$
For given $u\in \mathcal{E}(u_0,T)$, let $v(x,t;u)$ be the solution of the second equation in \eqref{stationary-eq}.
Then for every $u\in\mathcal{E}(u_0,T)$,  $x\in\R, t\geq 0$, we have that
\begin{equation}\label{v-eq0001}
v(x,t;u)=\int_{0}^{\infty}\int_{\R}\frac{e^{-s}}{\sqrt{4\pi s}}e^{-\frac{|x-z|^2}{4s}}u(z+\tau cs,t)dzds,
\end{equation}
and
\begin{align}\label{partial-v-eq0001}
v_{x}(x,t;u_0)&=\frac{1}{\sqrt{\pi}}\int_0^\infty\int_\R\frac{(z-x)e^{-s}}{2s\sqrt{4\pi s}}e^{-\frac{|x-z|^2}{4s}}u(x+\tau cs,t)dzds\nonumber\\
&=\frac{1}{\sqrt{\pi}}\int_{0}^{\infty}\int_{\R}\frac{ze^{-s}}{\sqrt{s}}e^{-z^2}u(x+2\sqrt{s}z+\tau cs,t)dzds\nonumber\\
 & =\frac{1}{\sqrt{\pi}}\int_{0}^{\infty}\int_{0}^{\infty}\frac{ze^{-s}}{\sqrt{s}}e^{-z^2}u(x+2\sqrt{s}z+\tau cs,t)dzds\nonumber\\
&  - \frac{1}{\sqrt{\pi}}\int_{0}^{\infty}\int_{0}^{\infty}\frac{ze^{-s}}{\sqrt{s}}e^{-z^2}u(x-2\sqrt{s}z+\tau cs,t)dzds .
\end{align}
Using the fact that
$
\int_{0}^{\infty}\frac{e^{-s}}{\sqrt{s}}ds=\sqrt{\pi}$ and $\int_{0}^{\infty}ze^{-z^2}dz=\frac{1}{2}$ it follows from \eqref{v-eq0001} and \eqref{partial-v-eq0001} that
\begin{equation}\label{Estimate-v-partial-v-eq0001}
|v(x,t;u)|\le C_0,\quad |v_x(x,t;u)|\le \frac{C_0}{2}\quad \forall\, t\in [0,T],\,\, x\in\R.
\end{equation}

For given $u\in\mathcal{E}(u_0,T)$, let $\tilde U(x,t;u)$ be the solution of the initial value problem
\begin{equation}
\label{aux-eq1}
\begin{cases}\tilde U_t(x,t)= \tilde U_{xx}+(c-\chi v_x(x,t;u))\tilde U_x\cr
\qquad\qquad\,\,  + \tilde U
\big((a-\chi( v(x,t;u)-\tau cv_{x}(x,t;u))-(b-\chi)\tilde U\big),\quad x\in\R\cr
\tilde U(x,0;u)=u_0(x), \quad x\in\R.
\end{cases}
\end{equation}
Since $u_0\ge 0$, comparison principle for parabolic equations implies that $\tilde U(x,t)\ge 0$ for every $x\in\R, \ t\in[0, T]$. Using the fact that $v(\cdot,\cdot,u)\geq 0$, it follows from \eqref{Estimate-v-partial-v-eq0001} that
$$
\tilde U_t(x,t)\leq \tilde U_{xx}+(c-\chi v_x(x,t;u))\tilde U_x+ \tilde U\big((a+ \frac{\chi\tau cC_0}{2})-(b-\chi)\tilde U\big).
$$
Hence, comparison principle for parabolic equations implies that
\begin{equation}\label{auxx-eq1}
\tilde U(x,t,u)\leq \tilde u(t,\|u_0\|_\infty), \quad \forall \ x\in\R,\ t\in[0, T],
\end{equation}
 where $\tilde u$ is the solution of the ODE
$$
\begin{cases}
\tilde u_t= \tilde u \big(a+ \frac{\chi \tau c C_0}{2}-(b-\chi)\tilde u)\quad t>0\cr
\tilde u(0)=\|u_0\|_\infty.
\end{cases}
$$
Since $b-\chi>0$, the function $\tilde  u(\cdot,\|u_0\|_\infty)$ is defined for all time and satisfies $0\leq \tilde u(t,\|u_0\|_\infty)\leq \max\{\|u_0\|_{\infty},\ \frac{a+ \frac{\chi \tau c C_0}{2}}{b-\chi}\}$ for every $t\geq 0$. This combined with \eqref{auxx-eq1} yield that
\begin{equation}\label{auxx-eq2}
0\le \tilde U(x,t;u)\le \max\{\|u_0\|,  \frac{a+ \frac{\chi \tau c C_0}{2}}{b-\chi}\}\leq C_0\quad \forall\,\, x\in\R,\,\,\forall\,\, t\in [0,T].
\end{equation}
Note that the second inequality in \eqref{auxx-eq2} follows from the fact $\frac{a+ \frac{\chi \tau c C_0}{2}}{b-\chi}\leq C_0$ whenever $C_0\ge \frac{a}{b-\chi- \frac{\chi \tau c}{2}}$.  It then follows that  $\tilde U(\cdot,\cdot;u)\in\mathcal{E}(u_0,T)$.

\smallskip

Following the proof of Lemma 4.3 in \cite{SaSh2}, we can prove that the mapping $\mathcal{E}(u_0,T)\ni u\mapsto \tilde U(\cdot,\cdot;u)\in\mathcal{E}(u_0,T)$ has a fixed point
$\tilde U(x,t;u)=u(x,t)$. Note that if $\tilde U(\cdot,\cdot,u)=u$, then $(u(\cdot,\cdot),v(\cdot,\cdot;u))$ is a solution of \eqref{reduced-eq2}. Since $u(\cdot,\cdot;u_0)$ is the only solution of \eqref{reduced-eq2}, thus $u(\cdot,\cdot,u_0)=u(\cdot,\cdot)$. Hence, it follows from \eqref{auxx-eq2} that for any $T>0$,
\begin{equation}\label{u-eq0001}
0\le u(x,t;u_0)\le \max\{\|u_0\|,  \frac{a+ \frac{\chi \tau c C_0}{2}}{b-\chi}\}\quad \forall\,\, t\in [0,T].
\end{equation}
 This implies that $T_{\max}(u_0)=\infty$. Inequalities \eqref{global-exixst-thm-eq1} and \eqref{global-exixst-thm-eq2} follow from \eqref{u-eq0001} with $C_{0}=\max\{\|u_0\|_{\infty},\frac{a}{b-\chi- \frac{\chi \tau c }{2}}\}$.
\end{proof}

\medskip

In the next result, we prove the stability of the positive constant solution $(a/b,a/b)$ of \eqref{reduced-eq2}.

\medskip

\begin{proof}[Proof of Theorem B]
Let $u_{0}\in C^{b}_{\rm uinf}(\R)$ with $\inf_{x\in\R}u_0(x)>0$ be given. By Theorem A,
 $ \sup_{x\in\R,\ t\geq 0}u(x,t;u_0)<\infty$. Hence
$$
M:=\sup_{x\in\R,\ t\ge 0}(v(x,t,u_0)+|v_{x}(x,t;u_0)|)<\infty.
$$
Thus, we have that
$$
u_{t}\geq u_{xx}+(c-\chi v_{x})u_x +(a-\chi(1+\tau c)M-(b-\chi)u), \quad \forall x\in\R,\ t>0.
$$
Therefore, comparison principle for parabolic equations implies that
$$
u(x,t;u_{0})\geq U^{l}(t),\quad \forall\ x\in\R, \forall\ t\geq 0,
$$
where $U^{l}(t)$ is the solution of the ODE
$$
\begin{cases}
U_{t}=U(a-\chi(1+\tau c)M-(b-\chi)U)\cr
U(0)=\inf_{x}u_0(x).
\end{cases}
$$
Since $\inf_{x}u_0(x)>0$, we have that $U^{l}(t)>0$ for every $t\geq 0$. Thus
\begin{equation}\label{e00}
0<U^{l}(t)\leq \inf_{x\in\R}u(x,t;u_0),\quad \forall\,\,  t\geq 0.
\end{equation}

 Next, let us define
$$
\bar u=\limsup_{t\to\infty}\sup_{x\in\R} u(x,t;u_0),\quad \underline u=\liminf_{t\to\infty}\inf_{x\in\R} u(x,t;u_0).
$$
Then for any $\epsilon>0$, there is $T_\epsilon>0$ such that
\begin{equation}\label{e000}
\underline u-\epsilon<u(x,t;u_0)\le \bar u+\epsilon\quad \forall t\ge T_\epsilon,\quad x\in\R.
\end{equation}
Note that for every $x\in\R$ and $t\geq 0$ we have that
\begin{equation}\label{e0}
v(x,t;u_0)=\int_{0}^\infty\int_{\R}\frac{e^{-s}}{\sqrt{4\pi s}}e^{-\frac{|x-z|^2}{4s}}u(z+\tau cs,t;u_0)dzds.
\end{equation}
Thus it follows from \eqref{e000} and \eqref{e0} that
\begin{equation}\label{e1}
\underline{u}-\varepsilon\le v(x,t,u_0)\leq \overline{u}+\varepsilon \quad \text{and}\quad \ \forall x\in\R, \, t\ge T_\varepsilon.
\end{equation}
Using again \eqref{e0}, for every $x\in\R$ and every $t>0$, we have that
\begin{eqnarray}\label{e2}
v_{x}(x,t;u_0)&= &\frac{1}{\sqrt{\pi}}\int_{0}^{\infty}\int_{R}\frac{z}{\sqrt{s}}e^{-s}e^{-z^2}u(x+2\sqrt{s}z+\tau cs,t;u_0)dzds\nonumber\\
& = & -\frac{1}{\sqrt{\pi}}\int_{0}^{\infty}\int_{0}^{\infty}\frac{z}{\sqrt{s}}e^{-s}e^{-z^2}u(x-2\sqrt{s}z+\tau cs,t;u_0)dzds\nonumber\\
& &+\frac{1}{\sqrt{\pi}}\int_{0}^{\infty}\int_{0}^{\infty}\frac{z}{\sqrt{s}}e^{-s}e^{-z^2}u(x+2\sqrt{s}z+\tau cs,t,u_0)dzds.\nonumber\\
\end{eqnarray}
Combining \eqref{e000} and \eqref{e2}, for every $x\in\R$ and $t\geq T_{\varepsilon}$, we obtain that
\begin{eqnarray}\label{e3}
v_{x}(x,t,u_0)& \geq & -\frac{(\overline{u}
+\varepsilon)}{\sqrt{\pi}}\int_{0}^{\infty}\frac{e^{-s}}{\sqrt{s}}\Big[\int_{0}^{\infty}ze^{-z^2}dz\Big]ds+\nonumber\\
& & +\frac{(\underline{u}-
\varepsilon)}{\sqrt{\pi}}\int_{0}^{\infty}\frac{e^{-s}}{\sqrt{s}}\Big[\int_{0}^{\infty}ze^{-z^2}dz\Big]ds\nonumber\\
& = & -\frac{(\overline{u}-\underline{u}+2\varepsilon)}{2}
\end{eqnarray}
Similarly, combining \eqref{e000} and \eqref{e2}, for every $x\in\R$ and $t\geq T_{\varepsilon}$, we obtain that
\begin{eqnarray}\label{e4}
v_{x}(x,t,u_0)& \leq & -\frac{(\underline{u}
-\varepsilon)}{\sqrt{\pi}}\int_{0}^{\infty}\frac{e^{-s}}{\sqrt{s}}\Big[\int_{0}^{\infty}ze^{-z^2}dz\Big]ds+\nonumber\\
& & +\frac{(\overline{u}+
\varepsilon)}{\sqrt{\pi}}\int_{0}^{\infty}\frac{e^{-s}}{\sqrt{s}}\Big[\int_{0}^{\infty}ze^{-z^2}dz\Big]ds\nonumber\\
& = & \frac{(\overline{u}-\underline{u}+2\varepsilon)}{2}.
\end{eqnarray}
Thus, using inequalities \eqref{e1} and \eqref{e4}, we have that
$$
u_t\leq u_{xx}+(c-\chi v_{x})u_x+(a-\chi(\underline{u}-\varepsilon)+\frac{\chi\tau c}{2}(\overline{u}-\underline{u}+2\varepsilon)-(b-\chi)u ), \quad t\geq T_{\varepsilon}.
$$
Thus, comparison principle for parabolic equations implies that
\begin{equation}\label{e5}
u(x,t;u_0)\leq \overline{U}^{\varepsilon}(t),\quad \forall\, x\in\R,\ \forall\, t\geq T_{\varepsilon},
\end{equation}
where $\overline{U}^{\varepsilon}(t)$ is the solution of the ODE
$$
\begin{cases}
\overline{U}_{t}=\overline{U}((a-\chi(\underline{u}-\varepsilon)+\frac{\chi\tau c}{2}(\overline{u}-\underline{u}+2\varepsilon))_{+}-(b-\chi)\overline{U})\quad t> T_{\varepsilon}\cr
\overline{U}(T_{\varepsilon})=\|u(\cdot,T_{\varepsilon};u_0)\|_{\infty}.
\end{cases}
$$
Since $\|u(\cdot,T_{\varepsilon};u_0)\|_{\infty}>0$ and $b-\chi>0$, we have that $\overline{U}^{\varepsilon}(t)$ is defined for all time and satisfies $ \lim_{t\to\infty}\overline{U}^{\varepsilon}(t)= \frac{(a-\chi(\underline{u}-\varepsilon)+\frac{\chi\tau c}{2}(\overline{u}-\underline{u}+2\varepsilon))_{+}}{b-\chi}$. Thus, it follows from \eqref{e5} that
$$
\overline{u}\leq \frac{(a-\chi(\underline{u}-\varepsilon)+\frac{\chi\tau c}{2}(\overline{u}-\underline{u}+2\varepsilon))_{+}}{b-\chi},\quad \forall\, \varepsilon>0.
$$
Letting $\varepsilon$ tends to 0 in the last inequality, we obtain that
\begin{equation*}
\overline{u}\leq \frac{(a-\chi\underline{u}+\frac{\chi\tau c}{2}(\overline{u}-\underline{u}))_{+}}{b-\chi}
\end{equation*}
But, note that $ (a-\chi\underline{u}+\frac{\chi\tau c}{2}(\overline{u}-\underline{u}))_{+}=0$ would implies that $\underline{u}=\overline{u}=0$. Which in turn yields that
$$
0=(a-\chi\underline{u}+\frac{\chi\tau c}{2}(\overline{u}-\underline{u}))_{+}=a.
$$
This is impossible since $a>0$. Hence $(a-\chi\underline{u}+\frac{\chi\tau c}{2}(\overline{u}-\underline{u}))_{+}>0$. Whence
\begin{equation}\label{e6}
\overline{u}\leq \frac{a-\chi\underline{u}+\frac{\chi\tau c}{2}(\overline{u}-\underline{u})}{b-\chi}.
\end{equation}

Next, using again inequalities \eqref{e1} and \eqref{e3}, we have that
$$
u_t\geq u_{xx}+(c-\chi v_{x})u_x+(a-\chi(\overline{u}+\varepsilon)-\frac{\chi\tau c}{2}(\overline{u}-\underline{u}+2\varepsilon)-(b-\chi)u ), \quad t\geq T_{\varepsilon}.
$$
Thus, comparison principle for parabolic equations implies that
\begin{equation}\label{e7}
u(x,t;u_0)\geq \underline{U}^{\varepsilon}(t),\quad \forall\, x\in\R,\ \forall\, t\geq T_{\varepsilon},
\end{equation}
where $\underline{U}^{\varepsilon}(t)$ is the solution of the ODE
$$
\begin{cases}
\underline{U}_{t}^\varepsilon=\underline{U}^\varepsilon(a-\chi(\overline{u}+\varepsilon)-\frac{\chi\tau c}{2}(\overline{u}-\underline{u}+2\varepsilon)-(b-\chi)\underline{U}^\varepsilon)\quad t> T_{\varepsilon}\cr
\underline{U}^\varepsilon(T_{\varepsilon})=\inf_{x\in\R}u(x,T_{\varepsilon},u_0).
\end{cases}
$$
From \eqref{e00} we know that  $\inf_{x\in\R}u(x,T_{\varepsilon},u_0)>0$.  Thus, using the fact $b-\chi>0$, we have that $\underline{U}^\varepsilon$ is defined for all time and satisfies $ \lim_{t\to\infty}\underline{U}^\varepsilon= \frac{(a-\chi(\overline{u}+\varepsilon)-\frac{\chi\tau c}{2}(\overline{u}-\underline{u}+2\varepsilon))_{+}}{b-\chi}$. Thus, it follows from \eqref{e7} that
$$
\underline{u}\geq \frac{(a-\chi(\overline{u}+\varepsilon)-\frac{\chi\tau c}{2}(\overline{u}-\underline{u}+2\varepsilon))_{+}}{b-\chi},\quad \forall\, \varepsilon>0.
$$
Letting $\varepsilon$ tends to 0 in the last inequality, we obtain that
\begin{equation}\label{e8}
\underline{u}\geq \frac{(a-\chi\overline{u}-\frac{\chi\tau c}{2}(\overline{u}-\underline{u}))_{+}}{b-\chi}.
\end{equation}
It follows from inequality \eqref{e6} and \eqref{e8} that
$$
(b-\chi)(\overline{u}-\underline{u})\leq  \chi(1+\tau c)(\overline{u}-\underline{u}).
$$
Which is equivalent to
$$
(b-\chi- \chi(1+\tau c))(\overline{u}-\underline{u})\leq 0.
$$
Since $b-\chi- \chi(1+\tau c)>0 $, then $\overline{u}=\underline{u}$. Thus it follows from \eqref{e6}  and \eqref{e8} that $\overline{u}\leq \frac{a}{b}$ and $\overline{u}\geq \frac{a}{b}$ respectively. That is $\overline{u}=\underline{u}=\frac{a}{b}$.
\end{proof}

\section{Super- and sub-solutions}

In this section, we will construct super- and sub-solutions of some related equations of \eqref{reduced-eq2}, which will be used to prove the existence of traveling wave solutions of \eqref{reduced-eq2} in next section. Throughout this section we suppose that $a>0$  and $b>0$ are given positive real numbers.

 Note that, for given $c$, to show the existence of a traveling wave solution of \eqref{main-eq1} connecting $(\frac{a}{b},\frac{a}{b})$ and
 $(0,0)$  with speed $c$ is then equivalent to
show the existence of a stationary solution of \eqref{reduced-eq2} connecting $(\frac{a}{b},\frac{a}{b})$ and $(0,0)$.

For every $\tau>0$ and $0<\mu \leq\min\{\sqrt{a}, \sqrt{\frac{1+\tau a}{(1-\tau)_{+}}}\}$, define
$$c_{\mu}=\mu+\frac{a}{\mu}\quad {\rm and}\quad \varphi_{\tau,\mu}(x)=e^{-\mu x}\quad \forall\,\, x\in\R.$$
Note that for every fixed  $\tau>0$ and $0<\mu  < \min\{\sqrt{a}, \sqrt{\frac{1+\tau a}{(1-\tau)_{+}}}\}$,  $1+\tau \mu c_\mu-\mu^2>0$ and the function $\varphi_{\tau,\mu}$ is decreasing, infinitely many differentiable, and satisfies
 \begin{equation}\label{Eq1 of varphi}
\varphi_{\tau,\mu}''(x)+c_{\mu}\varphi_{\tau,\mu}'(x)+a\varphi_{\tau,\mu}(x)=0 \quad\forall\ x\in\R
\end{equation}
and
\begin{equation}\label{Eq2 of varphi}
\frac{1}{1+\tau \mu c_\mu -\mu^2} \varphi_{\tau,\mu}''(x)+\frac{\tau  c_\mu}{1+\tau \mu c_\mu -\mu^2}\varphi' _{\tau,\mu}(x)- \frac{1}{1+\tau \mu c_\mu -\mu^2}\varphi_{\tau,\mu}(x)=-\varphi_{\tau,\mu}(x)\quad \forall\,\, x\in\R.
\end{equation}

For every $C_0>0,\ \tau>0$, and $0<\mu<\min\{\sqrt{a}, \sqrt{\frac{1+\tau a}{(1-\tau)_{+}}}\}$,    define
\begin{equation}
U_{\tau,\mu,C_0}^{+}(x)=\min\{C_0, \varphi_{\tau,\mu}(x)\}=\begin{cases}
C_0 \ \quad \text{if }\ x\leq \frac{-\ln(C_0)}{\mu}\\
e^{-\mu x} \quad \ \text{if}\ x\geq \frac{-\ln(C_0)}{\mu}.
\end{cases}
\end{equation}
and
\begin{equation}V_{\tau,\mu,C_0}^{+}(x)=\min\{C_0, \,\  \frac{1}{1+\tau \mu c_\mu-\mu^2}\varphi_{\tau,\mu}(x)\}.
\end{equation}
Since $\varphi_{\tau,\mu}$ is decreasing, then the functions $U^{+}_{\tau,\mu, C_0}$ and $V_{\tau,\mu,C_0}^{+}$ are both non-increasing. Furthermore, the functions $U^{+}_{\tau,\mu,C_0}$ and $V_{\tau,\mu,C_0}^{+}$ belong to $C^{b,\delta}_{\rm unif}(\R)$ for every $0<\delta< 1$, $\tau>0$,  $0< \mu< \min\{\sqrt{a}, \sqrt{\frac{1+\tau a}{(1-\tau)_{+}}}\}$, and $C_0>0$.

Let $C_0>0,\ \tau>0$, and  $0< \mu<\min\{\sqrt{a}, \sqrt{\frac{1+\tau a}{(1-\tau)_{+}}}\}$ be fixed. Next, let $\mu<\tilde{\mu}<\min\{2\mu,\sqrt{a}, \sqrt{\frac{1+\tau a}{(1-\tau)_{+}}}\}$ and $d>\max\{1, C_0^{\frac{\mu-\tilde{\mu}}{\mu}}\}$. The function $\varphi_{\tau,\mu}-d\varphi_{\tau,\tilde{\mu}}$ achieved its maximum value at $\bar{a}_{\mu,\tilde{\mu},d}:=\frac{\ln(d\tilde{\mu})-\ln(\mu)}{\tilde{\mu}-\mu}$ and takes the value zero at $\underline{a}_{\mu,\tilde{\mu},d}:= \frac{\ln(d)}{\tilde{\mu}-\mu}$.
Define
\begin{equation}
U_{\tau,\mu,C_0}^{-}(x):= \max\{ 0, \varphi_{\tau,\mu}(x)-d\varphi_{\tau,\tilde{\mu}}(x)\}=\begin{cases}
0\qquad \qquad \qquad \quad \text{if}\ \ x\leq \underline{a}_{\mu,\tilde{\mu},d}\\
\varphi_{\tau,\mu}(x)-d\varphi_{\tau,\tilde{\mu}}(x)\quad \text{if}\ x\geq \underline{a}_{\mu,\tilde{\mu},d}.
\end{cases}
\end{equation}
From the choice of $d$, it follows that  $0\leq U_{\tau,\mu,C_0}^{-}\leq U^{+}_{\tau,\mu,C_0}\leq C_0$ and $U_{\tau,\mu,C_0}^{-}\in C^{b,\delta}_{\rm unif}(\R)$ for every $0<\delta< 1$. Finally, let us consider the set $\mathcal{E}_{\tau,\mu}(C_0)$ defined by
\begin{equation}\label{definition-E-mu}
\mathcal{E}_{\tau,\mu}(C_0)=\{u\in C^{b}_{\rm unif}(\R) \,|\, U_{\tau,\mu,C_0}^{-}\leq u\leq U_{\tau,\mu,C_0}^{+}\}.
\end{equation}
It should be noted that $U_{\tau,\mu,C_0}^{-}$ and $\mathcal{E}_{\tau,\mu}(C_0)$ all depend on $\tilde{\mu}$ and $d$. Later on, we shall provide more information on how to choose $d$ and $\tilde{\mu}$ whenever $\tau$, $\mu$  and $C_0$ are given.

For every $u\in C_{\rm unif}^b(\R)$, consider
\begin{equation}\label{ODE2}
U_{t}=U_{xx}+(c_{\mu}-\chi V'(x;u))U_{x}+(a-\chi (V(x;u)-\tau c_{\mu}V'(x;u))-(b-\chi)U)U, \quad x\in \R,
\end{equation}
where
\begin{equation}\label{Inverse of u}
V(x;u)=\int_{0}^{\infty}\int_{\R}\frac{e^{-s}}{\sqrt{4\pi s}}e^{-\frac{|x-z|^{2}}{4s}}u(z+\tau c_{\mu}s)dzds.
\end{equation}
It is well known that the function $V(x;u)$
is the solution of the second equation of \eqref{reduced-eq2} in $C^{b}_{\rm unif}(\R)$ with given $u\in C_{\rm unif}^b(\R)$.

For  given open intervals $D\subset \R$ and $I\subset \R$, a function $U(\cdot,\cdot)\in C^{2,1}(D\times I,\R)$ is called a {\it super-solution} or {\it sub-solution} of \eqref{ODE2} on $D\times I$ if
$$U_{t}\ge U_{xx}+(c_{\mu}-\chi V'(x;u))U_{x}+(a-\chi (V(x;u)-\tau c_\mu V'(x;u))-(b-\chi)U)U \quad {\rm for}\,\, x\in D,\,\,\, t\in I
$$
or
$$
U_{t}\le U_{xx}+(c_{\mu}-\chi V'(x;u))U_{x}+(a-\chi( V(x;u)-\tau c_\mu V'(x;u))-(b-\chi)U)U \quad {\rm for}\,\, x\in D,\,\,\, t\in I.
$$
\begin{tm}
\label{super-sub-solu-thm}
Suppose that $\tau>0$, $0<\mu<\min\{\sqrt{a}, \sqrt{\frac{1+\tau a}{(1-\tau)_{+}}}\}$ and  $0<\chi<1$ satisfy
\begin{equation}\label{Eq01_Th1}
 1+ \tau c_{\mu}<\frac{b-\chi}{\chi}\quad \text{and}\quad  \Big(\frac{\mu +\tau c_\mu}{\sqrt {1+\tau\mu c_\mu -\mu^2}}+\frac{\mu( \mu +\tau c_\mu)}{1+\tau\mu c_\mu -\mu^2}\Big)\leq \frac{b- \chi}{\chi}.
\end{equation}
 Then  the following hold :

\begin{itemize}

\item[(1)]There is a positive real number $\tilde{C}_{0}>0$, $\tilde{C}_0=\tilde{C}_{0}(\tau,\mu,\chi)$, such for every $C_{0}\geq \tilde{C}_{0}$, and for every $u\in \mathcal{E}_{\tau,\mu}(C_0)$, we have that  $U(x,t)=C_0$ is  supper-solutions of \eqref{ODE2} on $\R\times\R$.

\item[(2)] For every $C_0>0$ and for every $u\in \mathcal{E}_{\tau,\mu}(C_0)$, $U(x,t)=\varphi_{\tau,\mu}(x)$ is a supper-solutions of \eqref{ODE2} on $\R\times\R$.

\item[(3)] For every $C_0>0$, there is $d_0>\max\{1,C_{0}^{\frac{\mu-\tilde{\mu}}{\mu}}\}$, $d_0=d_{0}(\tau,\mu,\chi)$, such  that for every $u\in \mathcal{E}_{\tau,\mu}(C_0)$, we have that  $U(x,t)=U_{\tau,\mu,C_0}^-(x)$ is a sub-solution of \eqref{ODE2} on
$(\underline{a}_{\mu,\tilde{\mu},d},\infty)\times \R$ for all $d\ge d_0$ and $\mu< \tilde{\mu}<\min\{\sqrt{a},\ \sqrt{\frac{1+\tau a}{(1-\tau)_{+}}},2\mu,\mu+\frac{1}{\mu+\sqrt{1+\tau\mu c_{\mu}-\mu^2}}\}$.

\item[(4)] Let $\tilde{C}_{0}$ be given by (1), then for every $u\in \mathcal{E}_{\tau,\mu}(\tilde C_0)$, $U(x,t)=U_{\tau,\mu,\tilde C_0}^-(x_\delta)$ is a sub-solution of \eqref{ODE2} on $\R\times \R$ for $0<\delta\ll 1$,
where $x_\delta=\underline{a}_{\mu,\tilde{\mu},d}+\delta$.
\end{itemize}
\end{tm}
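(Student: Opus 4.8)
The plan is to treat the four assertions as verifications of differential inequalities for explicit candidate profiles, after first recording two estimates on $V(\cdot;u)$ and $V'(\cdot;u)$ that feed all of them. For $u\in\mathcal{E}_{\tau,\mu}(C_0)$ (see \eqref{definition-E-mu}) one has $0\le u\le U^{+}_{\tau,\mu,C_0}\le\varphi_{\tau,\mu}$, and I will use: (i) $0\le V(x;u)\le V(x;\varphi_{\tau,\mu})=\frac{1}{1+\tau\mu c_\mu-\mu^2}\varphi_{\tau,\mu}(x)$, where the identity follows from \eqref{Eq2 of varphi} and the inequality from the positivity and monotonicity in $u$ of the representation \eqref{Inverse of u}; (ii) the crude bound $|V'(x;u)|\le\frac{C_0}{2}$ from \eqref{Estimate-v-partial-v-eq0001}; and (iii) the one-sided gradient bound $V'(x;u)\le\big(\frac{1}{\sqrt{1+\tau\mu c_\mu-\mu^2}}+\frac{\mu}{1+\tau\mu c_\mu-\mu^2}\big)\varphi_{\tau,\mu}(x)$, which I obtain by discarding the nonnegative subtracted term in the representation \eqref{partial-v-eq0001} (with $c=c_\mu$), inserting $u\le\varphi_{\tau,\mu}$, and evaluating the resulting Gaussian/Laplace integral; the exact evaluation yields a constant no larger than the one appearing in the second inequality of \eqref{Eq01_Th1}.

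Granting these, part (1) is immediate: for the constant candidate $C_0$ all derivatives vanish and the super-solution inequality reduces to $(b-\chi)C_0\ge a-\chi V+\chi\tau c_\mu V'$; by (i) and (ii) the right-hand side is at most $a+\frac{\chi\tau c_\mu}{2}C_0$, so it suffices that $\big(b-\chi-\frac{\chi\tau c_\mu}{2}\big)C_0\ge a$, and the first inequality of \eqref{Eq01_Th1} guarantees $b-\chi-\frac{\chi\tau c_\mu}{2}>0$, so $\tilde C_0:=\frac{a}{b-\chi-\chi\tau c_\mu/2}$ works. For part (2) I substitute $U=\varphi_{\tau,\mu}$ and use \eqref{Eq1 of varphi} to cancel $\varphi_{\tau,\mu}''+c_\mu\varphi_{\tau,\mu}'+a\varphi_{\tau,\mu}$; after using $\varphi_{\tau,\mu}'=-\mu\varphi_{\tau,\mu}$ the inequality collapses to $\chi\big[(\mu+\tau c_\mu)V'-V\big]\le(b-\chi)\varphi_{\tau,\mu}$, and discarding $-V\le0$ and inserting (iii) reduces this to exactly the second inequality of \eqref{Eq01_Th1}.

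For part (3) I write $w:=U^{-}_{\tau,\mu,C_0}=\varphi_{\tau,\mu}-d\varphi_{\tau,\tilde\mu}$ on $(\underline{a}_{\mu,\tilde\mu,d},\infty)$. The algebraic key is that $\varphi_{\tau,\tilde\mu}$ satisfies \eqref{Eq1 of varphi} only for its own speed $c_{\tilde\mu}$, so measured against $c_\mu$ it leaves a defect $\varphi_{\tau,\tilde\mu}''+c_\mu\varphi_{\tau,\tilde\mu}'+a\varphi_{\tau,\tilde\mu}=(\tilde\mu-\mu)(\tilde\mu-\tfrac a\mu)\varphi_{\tau,\tilde\mu}$; since $\mu<\tilde\mu<\sqrt a<\tfrac a\mu$ this produces a strictly positive reserve $w''+c_\mu w'+aw=d\,\delta_0\,\varphi_{\tau,\tilde\mu}$ with $\delta_0=(\tilde\mu-\mu)(\tfrac a\mu-\tilde\mu)>0$. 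The sub-solution inequality then amounts to asking that this reserve dominate the remainder $\chi V'w'+\chi(V-\tau c_\mu V')w+(b-\chi)w^2$. Using $0\le w\le\varphi_{\tau,\mu}$, $|w'|\le\mu\varphi_{\tau,\mu}+d\tilde\mu\varphi_{\tau,\tilde\mu}$, together with (i) and (iii), I bound this remainder by a combination of $\varphi_{\tau,\mu}^2=e^{-2\mu x}$ and $d\,\varphi_{\tau,\mu}\varphi_{\tau,\tilde\mu}=d\,e^{-(\mu+\tilde\mu)x}$. Because $\tilde\mu<2\mu$, both decay strictly faster than the reserve $\varphi_{\tau,\tilde\mu}=e^{-\tilde\mu x}$, so after dividing by $\varphi_{\tau,\tilde\mu}$ each remainder term is nonincreasing in $x$ and it suffices to check the inequality at the endpoint $x=\underline{a}_{\mu,\tilde\mu,d}$. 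There the restriction $\tilde\mu<\mu+\frac{1}{\mu+\sqrt{1+\tau\mu c_\mu-\mu^2}}$ controls the leading cross term, while taking $d\ge d_0$ large forces the reserve to win, since the ratio of each remainder term to $d\delta_0\varphi_{\tau,\tilde\mu}(\underline{a}_{\mu,\tilde\mu,d})$ carries a strictly negative power of $d$. I expect this simultaneous, $x$-uniform balancing, compatible with a single choice of $d_0$, to be the principal obstacle of the whole theorem.

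Finally part (4) upgrades the half-line sub-solution of part (3) to all of $\R$. On $(-\infty,\underline{a}_{\mu,\tilde\mu,d}]$ the function $U^{-}_{\tau,\mu,\tilde C_0}\equiv0$ is trivially a sub-solution (both sides vanish), and on the right it is a sub-solution by part (3); since $U^{-}_{\tau,\mu,\tilde C_0}=\max\{0,\varphi_{\tau,\mu}-d\varphi_{\tau,\tilde\mu}\}$ is the maximum of two sub-solutions whose only loss of smoothness is the upward corner at $\underline{a}_{\mu,\tilde\mu,d}$, the gluing is admissible in the generalized sense, the small shift $x_\delta=\underline{a}_{\mu,\tilde\mu,d}+\delta$ serving to position the corner and to keep $U^{-}_{\tau,\mu,\tilde C_0}$ ordered below the constant super-solution $\tilde C_0$ of part (1) throughout $\mathcal{E}_{\tau,\mu}(\tilde C_0)$.
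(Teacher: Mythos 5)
Parts (1)--(3) of your proposal are essentially the paper's own argument: the pointwise estimates you list as (i) and (iii) are the paper's Lemmas \ref{Mainlem2} and \ref{Mainlem3}, your $\tilde C_0=\frac{a}{\,b-\chi-\chi\tau c_\mu/2\,}$ is exactly the paper's choice in (1), your reduction of (2) to the second inequality of \eqref{Eq01_Th1} is the paper's computation, and your treatment of (3) --- isolating the reserve $dA_0\varphi_{\tau,\tilde\mu}$ with $A_0=(\tilde\mu-\mu)(\tfrac a\mu-\tilde\mu)>0$ and dominating the remainder for $d$ large --- matches the paper, differing only in the last balancing step (you divide by $\varphi_{\tau,\tilde\mu}$ and check at the left endpoint, where each ratio carries the factor $d^{-\mu/(\tilde\mu-\mu)}$; the paper instead absorbs the $d^2\varphi_{\tau,\tilde\mu}^2$ term via $\varphi_{\tau,\mu}>d\varphi_{\tau,\tilde\mu}$ on the domain and discards the cross term after showing its coefficient $A_2\ge0$ under the stated restriction on $\tilde\mu$). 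Both variants work. One small repair: in (3) the term $-\chi V'(\cdot;u)\,(U^-_{\tau,\mu,C_0})'$ requires the two-sided bound $|V'(x;u)|\le\big(\tfrac{1}{\sqrt{1+\tau\mu c_\mu-\mu^2}}+\tfrac{\mu}{1+\tau\mu c_\mu-\mu^2}\big)\varphi_{\tau,\mu}(x)$, since $(U^-_{\tau,\mu,C_0})'$ changes sign at $\bar a_{\mu,\tilde\mu,d}$; your one-sided (iii) extends to this by the symmetric computation, and the two-sided version is what Lemma \ref{Mainlem3} actually states.

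Part (4) contains a genuine gap: you have misread the statement. The candidate there is the \emph{constant} function $U(x,t)\equiv U^-_{\tau,\mu,\tilde C_0}(x_\delta)$ --- $x_\delta=\underline a_{\mu,\tilde\mu,d}+\delta$ is a fixed number, so $U^-_{\tau,\mu,\tilde C_0}(x_\delta)$ is a small positive constant --- and the assertion is that this constant is a sub-solution on all of $\R\times\R$. Your gluing of the zero function with the half-line sub-solution of (3) proves a different claim (that $x\mapsto U^-_{\tau,\mu,\tilde C_0}(x)$ is a generalized sub-solution on $\R$), and that claim is not what is needed downstream: in Lemma \ref{lm2} the constant $U^-_{\tau,\mu,\tilde C_0}(x_\delta)$ serves as a lower barrier on $\{x\le x_\delta\}$, which is what ultimately yields $\liminf_{x\to-\infty}U(x;u)>0$; your glued function vanishes for $x\le\underline a_{\mu,\tilde\mu,d}$ and cannot deliver this. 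The actual proof is a short computation your proposal does not contain: for the constant $\kappa=U^-_{\tau,\mu,\tilde C_0}(x_\delta)$ all derivatives vanish and $\mathcal L\kappa=\big(a-\chi(V-\tau c_\mu V')-(b-\chi)\kappa\big)\kappa\ge\big(a-\chi(1+\tfrac{\tau c_\mu}{2})\tilde C_0-(b-\chi)\kappa\big)\kappa$, and the first inequality in \eqref{Eq01_Th1} gives $a-\chi(1+\tfrac{\tau c_\mu}{2})\tilde C_0=\frac{a\,(b-2\chi(1+\tau c_\mu/2))}{b-\chi(1+\tau c_\mu/2)}>0$, so the right-hand side is positive once $\kappa$ is small, i.e.\ once $0<\delta\ll1$. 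This positivity of the reaction coefficient at the level $\tilde C_0$ is the one idea in (4) that must be supplied.
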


To prove Theorem \ref{super-sub-solu-thm}, we first establish some estimates on
$V(\cdot;u)$ and $V^{'}(\cdot;u)$.

It follows from \eqref{Inverse of u}, that
\begin{equation}\label{Estimates on Inverse of u}
\max\{\|V(\cdot;u)\|_{\infty}, \ \|V'(\cdot;u)\|_{\infty} \}\leq \|u\|_{\infty}\quad \forall\ u\in C^{b}_{\rm unif}(\R).
\end{equation}
Furthermore, let
$$
C_{\rm unif}^{2,b}(\R)=\{u\in C_{\rm unif}^b(\R)\,|\, u^{'}(\cdot),\, u^{''}(\cdot)\in C_{\rm unif}^b(\R)\}.
$$
For every $u\in C_{\rm unif}^{b}(\R)$, $u\geq 0,$ we have that  $V(\cdot;u)\in C^{2,b}_{\rm unif}(\R)$ with $V(\cdot;u)\geq 0$ and
$$\| V''(\cdot;u)\|_{\infty}=\|V(\cdot;u)-\tau c_\mu V'(\cdot;u)-u\|_{\infty}\leq \max\Big\{\|V(\cdot;u)\|_{\infty},\|\tau c_\mu V'(\cdot;u)+u\|_{\infty}\Big\}.
$$
Combining this with inequality \eqref{Estimates on Inverse of u}, we obtain that
\begin{equation}\label{Estimates on Inverse of V}
\max\{\|V(\cdot;u)\|_{\infty}, \ \|V'(\cdot;u)\|_{\infty}, \|V''(\cdot;u)\|_{\infty}\}\leq (\tau c_\mu+1)\|u\|_{\infty}\quad \forall\ u\in \mathcal{E}_{\tau,\mu}(C_0).
\end{equation}

The next Lemma provide a pointwise estimate  for $|V(\cdot;u)|$ whenever $u\in \mathcal{E}_{\tau, \mu}(C_0)$.

\begin{lem}\label{Mainlem2}
For every $C_0>0$, $\tau>0$, $0<\mu<\min\{\sqrt{a},\ \sqrt{\frac{1+\tau a}{(1-\tau)_{+}}}\}$ and $u\in \mathcal{E}_{\tau,\mu}(C_0)$, let $V(\cdot;u)$ be defined as in \eqref{Inverse of u}, then
\begin{equation}\label{Eq_MainLem2}
0\leq  V(\cdot;u)\leq V^{+}_{\tau,\mu,C_0}(\cdot).
\end{equation}
\end{lem}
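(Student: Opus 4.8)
The plan is to exploit the linearity and monotonicity of the map $u\mapsto V(\cdot;u)$ defined by \eqref{Inverse of u}, together with the two pointwise bounds that membership $u\in\mathcal{E}_{\tau,\mu}(C_0)$ forces on $u$, namely $0\le u\le U^{+}_{\tau,\mu,C_0}=\min\{C_0,\varphi_{\tau,\mu}\}$. Since the kernel $\frac{e^{-s}}{\sqrt{4\pi s}}e^{-|x-z|^2/(4s)}$ is nonnegative, the operator $V(\cdot;u)$ is nonnegative whenever $u\ge 0$ and is order preserving: $u_1\le u_2$ pointwise implies $V(\cdot;u_1)\le V(\cdot;u_2)$. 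This immediately yields the lower bound $V(\cdot;u)\ge 0$ and reduces the upper bound to comparing $V(\cdot;u)$ with the values of $V$ on the two ``ceilings'' $C_0$ and $\varphi_{\tau,\mu}$.

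For the constant ceiling I would use the normalization $\int_0^\infty\int_\R \frac{e^{-s}}{\sqrt{4\pi s}}e^{-|x-z|^2/(4s)}\,dz\,ds=1$ (the inner Gaussian integrates to $1$ and $\int_0^\infty e^{-s}\,ds=1$), so that $V(\cdot;C_0)\equiv C_0$; hence $u\le C_0$ gives $V(\cdot;u)\le C_0$. For the exponential ceiling I would compute $V(\cdot;\varphi_{\tau,\mu})$ explicitly. Because $e^{-\mu x}$ is an eigenfunction of the heat operator, $\int_\R \frac{1}{\sqrt{4\pi s}}e^{-|x-z|^2/(4s)}e^{-\mu z}\,dz=e^{\mu^2 s}e^{-\mu x}$, so substituting $u=\varphi_{\tau,\mu}$ into \eqref{Inverse of u} and carrying out the $z$-integral gives
\[
V(x;\varphi_{\tau,\mu})=e^{-\mu x}\int_0^\infty e^{-s(1+\tau\mu c_\mu-\mu^2)}\,ds=\frac{1}{1+\tau\mu c_\mu-\mu^2}\,\varphi_{\tau,\mu}(x),
\]
where the $s$-integral converges precisely because $1+\tau\mu c_\mu-\mu^2>0$ under the standing hypothesis $0<\mu<\min\{\sqrt a,\sqrt{\frac{1+\tau a}{(1-\tau)_+}}\}$.

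Combining the two comparisons, $u\le\min\{C_0,\varphi_{\tau,\mu}\}$ yields $V(\cdot;u)\le\min\{C_0,\frac{1}{1+\tau\mu c_\mu-\mu^2}\varphi_{\tau,\mu}\}=V^{+}_{\tau,\mu,C_0}$, which together with $V(\cdot;u)\ge 0$ is exactly \eqref{Eq_MainLem2}. The only point requiring care — and the reason I would perform the integral directly rather than appeal to uniqueness of bounded solutions as in the remark following \eqref{Inverse of u} — is that $\varphi_{\tau,\mu}(x)=e^{-\mu x}$ is unbounded as $x\to-\infty$ and therefore lies outside $C^{b}_{\rm unif}(\R)$; the identity $V(\cdot;\varphi_{\tau,\mu})=\frac{1}{1+\tau\mu c_\mu-\mu^2}\varphi_{\tau,\mu}$ must hence be established by the explicit Gaussian computation above (which one may cross-check against \eqref{Eq2 of varphi}, the stationary equation solved by $\frac{1}{1+\tau\mu c_\mu-\mu^2}\varphi_{\tau,\mu}$), and the finiteness of that computation is exactly where the constraint on $\mu$ is genuinely used.
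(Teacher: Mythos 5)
Your proof is correct and follows essentially the same route as the paper: monotonicity of the kernel reduces the claim to the two ceilings $C_0$ and $\varphi_{\tau,\mu}$, the normalization of the kernel handles the constant ceiling, and the explicit Gaussian computation gives $V(\cdot;\varphi_{\tau,\mu})=\frac{1}{1+\tau\mu c_\mu-\mu^2}\varphi_{\tau,\mu}$, with the condition $1+\tau\mu c_\mu-\mu^2>0$ (i.e.\ the standing constraint on $\mu$) guaranteeing convergence of the $s$-integral, exactly as in the paper's display \eqref{Eq011}. The paper merely phrases the reduction as $V(\cdot;u)\le V(\cdot;U^{+}_{\tau,\mu,C_0})$ first and then bounds $U^{+}_{\tau,\mu,C_0}$ by each ceiling under the integral sign, which is the same argument.
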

\begin{proof} For every  $u\in \mathcal{E}_{\tau,\mu}(C_0)$, since $ 0 \leq U^{-}_{\tau,\mu,C_0}\leq u\leq U^{+}_{\tau,\mu,C_0}$ then
$$ 0\leq V(\cdot;U^{-}_{\tau,\mu,C_0})\leq V(\cdot;u)\leq V(\cdot;U^{+}_{\tau,\mu,C_0}).$$ Hence it is enough to prove that $V(\cdot;U_{\tau,\mu,C_0}^{+})\leq V^{+}_{\tau,\mu,C_0}(\cdot)$.  For every $x\in \R$, we have that
\begin{eqnarray}\label{Eq011}
\int_{0}^{\infty}\Big(\int_{\R}\frac{e^{-s}e^{-\frac{|x-z|^2}{4s}}\varphi_{\tau,\mu}( z+\tau c_{\mu}s)}{\sqrt{4\pi s}}dz\Big)ds & = &\frac{1}{\sqrt{\pi}}\int_{0}^{\infty}e^{-s}\Big(\int_{\R}e^{- z^2}e^{-\mu (x-2\sqrt{s}z +\tau c_{\mu}s)}dz\Big)ds\nonumber\\
&=& \frac{e^{-\mu x}}{\sqrt{\pi}}\int_{0}^{\infty}e^{-(1+\tau \mu c_\mu)s}\Big(\int_{\R}e^{- |z-\mu\sqrt{s}|^{2}}e^{\mu^{2} s}dz\Big)ds\nonumber\\
&=& \frac{e^{-\mu x}}{\sqrt{\pi}}\int_{0}^{\infty} e^{-(1+\mu \tau c_\mu -\mu^2)s}\Big(\underbrace{\int_{\R}e^{- |z-\mu\sqrt{s}|^{2}}dz}_{=\sqrt{\pi}}\Big)ds\nonumber\\
&=& \frac{\varphi_{\tau,\mu}(x)}{1+\tau \mu c_\mu -\mu^2}.
\end{eqnarray}
Thus, we have
\begin{eqnarray*}
V(x;U^{+}_{\tau,\mu,C_0})&= & \int_{0}^{\infty}\Big(\int_{\R}\frac{e^{-s}e^{-\frac{|x-z|^2}{4s}}U^{+}_{\tau,\mu,C_0}( z +\tau  c_\mu s)}{\sqrt{4\pi s}}dz\Big)ds\nonumber\\
&\leq & \min\Big\{ C_0\underbrace{\int_{0}^{\infty}\int_{\R}\frac{e^{-s}e^{-\frac{|x-z|^2}{4s}}}{\sqrt{4\pi s}}dzds}_{=1}\ ,\ \int_{0}^{\infty}\Big(\int_{\R}\frac{e^{-s}e^{-\frac{|x-z|^2}{4s}}}{\sqrt{4\pi s}} \varphi_{\tau,\mu}(z +\tau  c_\mu s)dz\Big)ds\Big\}\nonumber\\
&=& V^{+}_{\tau,\mu,C_0}(x).
\end{eqnarray*}
\end{proof}

Next, we present a pointwise  estimate  for $|V'(\cdot;u)|$ whenever $u\in \mathcal{E}_{\tau,\mu}(C_0).$

\begin{lem}\label{Mainlem3} Let $\tau>0,\ C_0>0$
 and $0<\mu<\min\{\sqrt{a}, \sqrt{\frac{1+\tau a}{(1-\tau)_{+}}}\}$ be fixed. Let $u\in C^{b}_{\rm unif}(\R)$ and $V(\cdot;u)\in C^{2,b}_{\rm unif}(\R)$ be the corresponding function satisfying the second equation of \eqref{reduced-eq2}. Then
\begin{equation}\label{Eq_Mainlem01}
|V'(x;u)|\leq   \Big(\frac{1}{\sqrt {1+\tau\mu c_\mu -\mu^2}}+\frac{\mu}{1+\tau\mu c_\mu -\mu^2}\Big) \varphi_{\mu}(x)
\end{equation}
for every $x \in\R$ and every $u\in\mathcal{E}_{\tau,\mu}(C_0)$.
\end{lem}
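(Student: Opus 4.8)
The plan is to work directly from the integral representation \eqref{Inverse of u} of $V(\cdot;u)$, differentiate it in $x$, and then exploit the pointwise bound $0\le u\le \varphi_{\tau,\mu}$ that holds for every $u\in\mathcal{E}_{\tau,\mu}(C_0)$ (since $u\le U^{+}_{\tau,\mu,C_0}=\min\{C_0,\varphi_{\tau,\mu}\}\le\varphi_{\tau,\mu}$). Differentiating \eqref{Inverse of u} and performing the change of variables exactly as in the derivation of \eqref{partial-v-eq0001}, one obtains
$$
V'(x;u)=\frac{1}{\sqrt{\pi}}\int_{0}^{\infty}\int_{\R}\frac{z\,e^{-s}}{\sqrt{s}}e^{-z^2}u(x+2\sqrt{s}z+\tau c_\mu s)\,dz\,ds.
$$
Since $u\ge 0$, taking absolute values inside the integral and replacing $u$ by $\varphi_{\tau,\mu}(\,\cdot\,)=e^{-\mu(\cdot)}$ gives
$$
|V'(x;u)|\le \frac{e^{-\mu x}}{\sqrt{\pi}}\int_{0}^{\infty}\frac{e^{-(1+\tau\mu c_\mu)s}}{\sqrt{s}}\int_{\R}|z|\,e^{-z^2-2\mu\sqrt{s}z}\,dz\,ds.
$$

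The next step is to complete the square in $z$, writing $-z^2-2\mu\sqrt{s}z=-(z+\mu\sqrt{s})^2+\mu^2 s$, so that the exponent in $s$ becomes $-(1+\tau\mu c_\mu-\mu^2)s$. Here the hypothesis $0<\mu<\min\{\sqrt{a},\sqrt{(1+\tau a)/(1-\tau)_{+}}\}$ enters through the positivity $\beta:=1+\tau\mu c_\mu-\mu^2>0$ recorded just after the definition of $\varphi_{\tau,\mu}$, which guarantees convergence of all the $s$-integrals. The crucial estimate is then the bound on the shifted Gaussian first moment: setting $w=z+\mu\sqrt{s}$ and using the triangle inequality $|z|=|w-\mu\sqrt{s}|\le|w|+\mu\sqrt{s}$ together with $\int_{\R}|w|e^{-w^2}dw=1$ and $\int_{\R}e^{-w^2}dw=\sqrt{\pi}$, I obtain
$$
\int_{\R}|z|\,e^{-(z+\mu\sqrt{s})^2}\,dz\le 1+\mu\sqrt{\pi s}.
$$

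Substituting this back and evaluating the two resulting one-dimensional integrals via $\int_{0}^{\infty}s^{-1/2}e^{-\beta s}\,ds=\sqrt{\pi/\beta}$ and $\int_{0}^{\infty}e^{-\beta s}\,ds=1/\beta$ produces exactly
$$
|V'(x;u)|\le e^{-\mu x}\Big(\frac{1}{\sqrt{\beta}}+\frac{\mu}{\beta}\Big)=\Big(\frac{1}{\sqrt{1+\tau\mu c_\mu-\mu^2}}+\frac{\mu}{1+\tau\mu c_\mu-\mu^2}\Big)\varphi_{\tau,\mu}(x),
$$
which is the asserted inequality \eqref{Eq_Mainlem01}. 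I expect the only delicate point to be the treatment of the moment $\int_{\R}|z|e^{-(z+\mu\sqrt{s})^2}dz$: because of the absolute value it has no clean closed form, and it is precisely the splitting $|z|\le|w|+\mu\sqrt{s}$ that generates the two separate terms $1/\sqrt{\beta}$ and $\mu/\beta$ of the final bound. The remaining manipulations—differentiation under the integral sign, the Gaussian change of variables, and the Gamma-function evaluations—are routine, with the positivity of $\beta$ invoked both to justify interchanging differentiation and integration and to ensure finiteness of the $s$-integrals.
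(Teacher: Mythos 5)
Your proposal is correct and follows essentially the same route as the paper: both differentiate the integral representation \eqref{Inverse of u}, replace $u$ by $\varphi_{\tau,\mu}$ using $u\le U^{+}_{\tau,\mu,C_0}\le\varphi_{\tau,\mu}$, complete the square to extract the factor $e^{-(1+\tau\mu c_\mu-\mu^2)s}$, and bound the shifted first moment via $|z|\le|z+\mu\sqrt{s}|_{\text{shifted}}+\mu\sqrt{s}$ to produce the two terms $1/\sqrt{\beta}$ and $\mu/\beta$. The computations of the resulting $s$-integrals match the paper's as well.
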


\begin{proof} Let $u\in\mathcal{E}_{\tau,\mu}(C_0)$ and fix any $x\in \R$.
\begin{align}\label{Eq_Mainlem0002}
|V'(x;u)|
&\leq \frac{1}{\sqrt{\pi}}\int_{0}^{\infty}\int_{\R}\frac{|z|e^{-s}}{\sqrt{ s }}e^{-z^{2}}u(x+2\sqrt{s}z+\tau c_{\mu}s)dzds\nonumber\\
& \leq\frac{1}{\sqrt{\pi}}\int_{0}^{\infty}\int_{0}^\infty\frac{|z|e^{-s}}{\sqrt{ s }}e^{-z^{2}}\varphi_{\tau,\mu}(x+2\sqrt{s}z+\tau c_{\mu}s)dzds\nonumber\\
& =\frac{\varphi_{\tau,\mu}(x)}{\sqrt{\pi}}\int_{0}^{\infty}\int_{\R}\frac{|z|e^{-(1+\mu\tau c_{\mu}-\mu^2)s}}{\sqrt{s}}e^{-(z+\mu\sqrt{s})^2}dzds\nonumber\\
& \leq \frac{\varphi_{\tau,\mu}(x)}{\sqrt{\pi}}\int_{0}^{\infty}\left(\frac{e^{-(1+\mu\tau c_{\mu}-\mu^2)s}}{\sqrt{s}}\right)\left[\int_{\R}(|z|+\mu\sqrt{s})e^{-z^2}dz\right]ds\nonumber\\
&=  \frac{\varphi_{\tau,\mu}(x)}{\sqrt{\pi}}\int_{0}^{\infty}\frac{(1+\mu\sqrt{\pi s})e^{-(1+\tau \mu c_{\mu}-\mu^2)s}}{\sqrt{s}}ds \nonumber\\
&= \Big(\frac{1}{\sqrt {1+\tau\mu c_\mu -\mu^2}}+\frac{\mu}{1+\tau\mu c_\mu -\mu^2}\Big)\varphi_{\tau,\mu}(x).
\end{align}
The Lemma is thus proved.
\end{proof}

\begin{proof}[Proof of Theorem \ref{super-sub-solu-thm}]
 For every $U\in C^{2,1}(\R\times\R_{+})$, let \begin{equation}\label{mathcal L}
\mathcal{L}U=U_{xx}+(c_{\mu}-\chi V'(\cdot;u))U_{x}+(a-\chi( V(\cdot;u)-\tau c_{\mu}V'(\cdot,u))-(b-\chi)U)U .
\end{equation}
(1)  First, using inequality \eqref{Estimate-v-partial-v-eq0001}, we have that
\begin{eqnarray}\label{N001}
\mathcal{L}( C_0)&=& (a-\chi (V(\cdot;u)-\tau c_{\mu}V'(\cdot,u))-(b-\chi)C_0)C_{0}\nonumber\\
              & = & -\chi C_0 V(\cdot,u) +(a-(b-\chi(1+ \frac{\tau c_\mu}{2}))C_0)C_0.
\end{eqnarray}
Observe that $ \frac{\tau c_\mu}{2}<\frac{b-\chi}{\chi}$ is equivalent to $b>\chi(1+ \frac{\tau c_\mu}{2})>0$. Thus taking $\tilde{C}_{0}:=\frac{a}{b-\chi(1+ \frac{\tau c_\mu}{2})}$, it follows from inequality \eqref{N001} that for every $C_{0}\geq \tilde{C}_0$, we have that
$\mathcal{L}(C_{0})\leq 0.$ Hence, for every $C_{0}\geq \tilde{C}_0$, we have that $U(x,t)=C_0$ is a super-solution of \eqref{ODE2} on $\R\times\R$.

(2) It follows from Lemma \ref{Mainlem3} and \eqref{Eq01_Th1} that
\begin{eqnarray*}
 \mathcal{L}(\varphi_{\tau,\mu})& =& \varphi''_{\tau,\mu}(x)+(c_{\mu}-\chi V'(\cdot;u))\varphi_{\tau,\mu}'(x)+(a-\chi (V(\cdot;u)-\tau c_\mu V'(\cdot,u))-(b-\chi)\varphi_{\tau,\mu})\varphi_{\tau,\mu}\nonumber\\
 & =& \underbrace{(\varphi''_{\tau,\mu}+c_{\mu}\varphi'_{\tau,\mu}+a\varphi_{\tau,\mu})}_{=0} +(\mu\chi V'(\cdot;u)-\chi( V(\cdot;u)-\tau c_{\mu} V'(\cdot,u))-(b-\chi)\varphi_{\tau,\mu} )\varphi_{\tau,\mu}\nonumber\\
   & \leq & \chi\Big[ \Big(\frac{\mu +\tau c_\mu}{\sqrt {1+\tau\mu c_\mu -\mu^2}}+\frac{\mu( \mu +\tau c_\mu)}{1+\tau\mu c_\mu -\mu^2}\Big)-\frac{b-\chi}{\chi}\Big]\varphi_{\mu}^{2} \leq  0.
\end{eqnarray*}
Hence $U(x,t)=\varphi_{\tau,\mu}(x)$ is also a super-solution of \eqref{ODE2} on $\R\times\R$.

(3)  Let $C_0>0$ and $O=(\underline{a}_{\mu,\tilde{\mu},d},\infty)$. Then for $x\in O$, $U_{\tau,\mu,C_0}^-(x)>0$.
 For $x\in O$, it follows from Lemma \ref{Mainlem3} and \eqref{Eq01_Th1} that
\begin{align*}
&\mathcal{L}(U_{\tau,\mu,C_0}^{-}
)\nonumber\\
& = \mu^2\varphi_{\tau,\mu}-\tilde{\mu}^2d\varphi_{\tau,\tilde{\mu}} +(c_{\mu}-\chi V'(\cdot;u))(-\mu\varphi_{\tau,\mu}+d\tilde{\mu}\varphi_{\tau,\tilde{\mu}})\nonumber\\
&\,\,\,\, +(a-\chi( V(\cdot;u)-\tau c_{\mu}V'(\cdot,u))-(b-\chi) U_{\tau,\mu,C_0}^{-})U_{\tau,\mu,C_0}^{-} \nonumber\\
& =\underbrace{(\mu^{2}-\mu c_{\mu}+a)}_{=0}\varphi_{\tau,\mu} +d\underbrace{(\tilde{\mu}c_{\mu}-\tilde{\mu}^{2}-a)}_{=A_{0}}\varphi_{\tau,\tilde{\mu}} -\chi V'(\cdot;u)(-(\tau c_{\mu}+\mu)\varphi_{\tau,\mu}\nonumber\\
&\,\, \,\,  + d(\tau c_\mu +\tilde{\mu})\varphi_{\tau,\tilde{\mu}})   -(\chi V +(b-\chi)U_{\tau,\mu,C_0}^{-})U_{\tau,\mu,C_0}^{-}\nonumber\\
& \geq  dA_{0}\varphi_{\tau,\tilde{\mu}} -\chi|V'(\cdot;u)|((\tau c_\mu+\mu)\varphi_{\tau,\mu}+d(\tau c_\mu+\tilde{\mu})\varphi_{\tau,\tilde{\mu}}) -\chi V^{+}_{ \tau,\mu,C_0}U_{ \tau,\mu, C_0}^{-}-(b-\chi)[U^{-}_{ \tau,\mu,C_0}]^2\nonumber\\
& \geq  dA_{0}\varphi_{\tau,\tilde{\mu}}-\chi\Big(\frac{1}{\sqrt {1+\tau\mu c_\mu -\mu^2}}+\frac{\mu}{1+\tau\mu c_\mu -\mu^2}\Big)((\tau c_{\mu}+\mu)\varphi_{\tau,\mu}+d(\tau c_{\mu}+\tilde{\mu})\varphi_{\tau,\tilde{\mu}})\varphi_{\tau,\mu} \nonumber\\
&\,\, \,\, -\chi V^{+}_{ \tau,\mu,C_0}U_{\tau,\mu,C_0}^{-}-(b-\chi)[U^{-}_{\tau,\mu,C_0}]^2\nonumber\\
& \geq  dA_{0}\varphi_{\tau,\tilde{\mu}}-\chi \Big(\frac{1}{\sqrt {1+\tau\mu c_\mu -\mu^2}}+\frac{\mu}{1+\tau\mu c_\mu -\mu^2}\Big)((\tau c_{\mu}+\mu)\varphi_{\tau,\mu}+d(\tau c_\mu+\tilde{\mu})\varphi_{\tau,\tilde{\mu}})\varphi_{\tau,\mu}\nonumber\\
& \,\,\,\,   -\frac{\chi}{1+\tau\mu c_{\mu}-\mu^2}\varphi_{\tau,\mu}U^{-}_{\tau,\mu,C_0}-(b-\chi)[U^{-}_{\tau,\mu,C_0}]^2 \nonumber\\
& =  dA_{0}\varphi_{\tau,\tilde{\mu}} -\underbrace{\Big(\chi\Big(\frac{(\tau c_{\mu}+\mu)}{\sqrt {1+\tau\mu c_\mu -\mu^2}}+\frac{\mu(\tau c_{\mu}+\mu)}{1+\tau\mu c_\mu -\mu^2}\Big)+\frac{\chi}{1+\tau\mu c_{\mu}-\mu^2}+(b-\chi)\Big)}_{=A_{1}}\varphi_{\tau,\mu}^2 \nonumber\\
& \,\, \,\, +d\Big(2(b-\chi)+\frac{\chi}{1+\tau\mu c_\mu-\mu^2}-\chi\Big( \frac{(\tau c_{\mu}+\tilde{\mu})}{\sqrt{1+\tau\mu c_{\mu} -\mu^2}} +\frac{\mu(\tau c_{\mu}+\tilde{\mu})}{1+\tau\mu c_{\mu}-\mu^2}\Big)\Big)\varphi_{\tau,\mu}\varphi_{\tau,\tilde{\mu}}\nonumber\\
&\,\, \,\,  -d^2(b-\chi)\varphi_{\tau,\tilde{\mu}}^2.
\end{align*}
Note that  $U_{\tau,\mu,C_0}^{-}(x)>0$ is equivalent to $\varphi_{\tau,\mu}(x)>d\varphi_{\tau,\tilde{\mu}}(x)$, which is again equivalent to
$$
d(b-\chi)\varphi_{\tau,\mu}(x)\varphi_{\tau,\tilde{\mu}}(x)>d^{2}(b-\chi)\varphi^2_{\tau,\tilde{\mu}}(x).
$$
Since $A_{1}>0$, thus for $x\in O$, we have
\begin{eqnarray*}
\mathcal{L}U_{\mu}^{-}(x) & \geq &  dA_{0}\varphi_{\tau,\tilde{\mu}}(x) -A_{1}\varphi_{\tau,\mu}^2(x)\nonumber\\
& & +d\underbrace{\Big(b-\chi+\frac{\chi}{1+\tau\mu c_\mu-\mu^2}- \chi\Big( \frac{(\tau c_{\mu}+\tilde{\mu})}{\sqrt{1+\tau\mu c_{\mu}-\mu^2}} +\frac{\mu(\tau c_{\mu}+\tilde{\mu})}{1+\tau\mu c_{\mu}-\mu^2}\Big)\Big)}_{A_{2}}\varphi_{\tau,\mu}(x)\varphi_{\tau,\tilde{\mu}}(x)\nonumber\\
& =& A_{1}\Big[\frac{dA_{0}}{A_{1}}e^{(2\mu-\tilde{\mu})x}-1\Big]\varphi_{\tau,\mu}^{2}(x) +dA_{2}\varphi_{\tau,\mu}(x)\varphi_{\tau,\tilde{\mu}}(x).
\end{eqnarray*}
Note also that, by \eqref{Eq01_Th1},
\begin{eqnarray}\label{Eq1 of Th2}
A_{2}&=&\chi\Big(\frac{b-\chi}{\chi}-\Big( \frac{(\tau c_{\mu}+\mu)}{\sqrt{1+\tau\mu c_{\mu}-\mu^2}} +\frac{\mu(\tau c_{\mu}+\mu)}{1+\tau\mu c_{\mu}-\mu^2}\Big)\Big)+ \chi\Big(\frac{1-(\tilde{\mu}-\mu)(\mu +\sqrt{1+\tau\mu c_{\mu}-\mu^2})}{1+\tau\mu c_{\mu}-\mu^2}\Big)\nonumber\\
&\geq &  \chi\Big(\frac{ 1-(\tilde{\mu}-\mu)(\mu +\sqrt{1+\tau\mu c_{\mu}-\mu^2})}{1+\tau\mu c_{\mu}-\mu^2}\Big)\nonumber\\
&\geq & 0,
\end{eqnarray}
whenever $\tilde{\mu}\leq\mu+\frac{1}{\mu+\sqrt{1+\tau\mu c_{\mu}-\mu^2}}$.
Observe that
$$
A_{0}=\frac{(\tilde{\mu}-\mu)(a-\mu\tilde{\mu})}{\mu}>0,\quad \forall\ 0<\mu<\tilde{\mu}<\sqrt{a}.
$$
 Furthermore, we have that $U_{\tau,\mu,C_0}^{-}(x)>0$ implies that $x>0$ for $d\geq \max\{1,C_0^{\frac{\mu-\tilde{\mu}}{\mu}}\}$. Thus, for every $ d\geq d_{0}:= \max\{1, \frac{A_{1}}{A_{0}}, C_0^{\frac{\mu-\tilde{\mu}}{\mu}}\}$, we have that
\begin{equation}\label{E1}
\mathcal{L}U_{\tau,\mu,C_0}^{-}(x) > 0
\end{equation}
whenever $x\in O$ and $\mu<\tilde{\mu}< \min\{\sqrt{a},\ \sqrt{\frac{1+\tau a}{(1-\tau)_+}},2\mu,\mu+\frac{1}{\mu+\sqrt{1+\tau\mu c_{\mu}-\mu^2}}\}$. Hence $U(x,t)=U_{\tau,\mu,C_0}^-(x)$ is a sub-solution of \eqref{ODE2} on $(\underline{a}_{\mu,\tilde{\mu},d},\infty)\times\R$.

\smallskip

(4) Observe that $ 1+\tau c_{\mu}<\frac{b-\chi}{\chi}$ is equivalent to $ b-2\chi(1+ \frac{\tau c_\mu}{2})>0$. Thus, it follows from \eqref{Eq01_Th1} that
$$
a-\chi(1+ \frac{\tau c_\mu}{2})\tilde{C}_{0}=\frac{a(b-2\chi(1+ \frac{\tau c_\mu}{2}))}{b-\chi(1+\tau c_\mu)}>0.
$$ Hence, for $0<\delta\ll 1$, we have that
\begin{align*}
&(a-\chi (V(x;u)-\tau c_\mu V'(x,u))-(b-\chi)U_{\tau,\mu,\tilde{C}_0}^-(x_\delta))U_{\tau,\mu,\tilde{C}_0}^-(x_\delta)\\
 &\geq  (a-\chi(1+\frac{\tau c_\mu}{2})\tilde C_{0}-(b-\chi)U_{\tau,\mu,\tilde{C}_0}^-(x_\delta))U_{\tau,\mu,\tilde{C}_0}^-(x_\delta) \nonumber\\
&>  0\quad\forall\,\, x\in\R,
\end{align*}
where $x_\delta=\underline{a}_{\mu,\tilde{\mu},d}+\delta$. This implies that $U(x,t)=U_{\tau,\mu,\tilde{C}_0}^-(x_\delta)$ is
a sub-solution of \eqref{ODE2} on $\R\times\R$.
\end{proof}

\section{Traveling wave solutions}

In this section we study the existence and nonexistence  of traveling wave solutions of \eqref{reduced-eq2} connecting $(\frac{a}{b},\frac{a}{b})$ and
$(0,0)$, and prove Theorems C and D.

\subsection{Proof of Theorem C}

In this subsection, we  prove Theorem C. To this end,  we first prove  the following important result.

\begin{tm}
\label{existence-tv-thm}
Let $a>0$ and $b>0$ be given. Suppose that $\tau>0$, $0<\chi<1 $ and $0< \mu<\min\{\sqrt{a},\sqrt{\frac{1+\tau a}{(1-\tau)_+}}\}$ satisfy  \eqref{Eq01_Th1}. Then \eqref{main-eq1} has a traveling wave solution
$(u(x,t),v(x,t))=(U(x-c_\mu t),V(x-c_\mu t))$ satisfying
$$
\lim_{x\to-\infty}U(x)=\frac{a}{b} \quad \text{and}\quad
\lim_{x\to\infty}\frac{U(x)}{e^{-\mu x}}=1
$$
where $c_{\mu}=\mu+\frac{a}{\mu}$.
\end{tm}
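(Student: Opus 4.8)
The plan is to exploit the equivalence recorded in the Introduction: a traveling wave solution of \eqref{main-eq1} with speed $c_\mu=\mu+\frac a\mu$ is the same object as a bounded stationary solution of the induced parabolic-elliptic system \eqref{reduced-eq2} (equivalently \eqref{stationary-eq}) with $c=c_\mu$ connecting $(a/b,a/b)$ and $(0,0)$. Thus it suffices to produce $U\in C^b_{\rm unif}(\R)$ solving $\mathcal LU=0$, with $\mathcal L$ as in \eqref{mathcal L} and $V=V(\cdot;U)$ given by \eqref{Inverse of u}, such that $U(-\infty)=a/b$ and $U(x)/e^{-\mu x}\to 1$ as $x\to\infty$. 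First I would fix the auxiliary parameters so that Theorem \ref{super-sub-solu-thm} applies: take $C_0=\tilde C_0(\tau,\mu,\chi)$ from part (1), then choose $\tilde\mu\in(\mu,\min\{2\mu,\sqrt a,\sqrt{(1+\tau a)/(1-\tau)_+},\mu+1/(\mu+\sqrt{1+\tau\mu c_\mu-\mu^2})\})$ and $d\ge d_0(\tau,\mu,\chi)$ as in parts (3)--(4). With these choices $\mathcal E_{\tau,\mu}(C_0)$ from \eqref{definition-E-mu} is a nonempty, bounded, closed, convex subset of $C^b_{\rm unif}(\R)$, and $U^+_{\tau,\mu,C_0}$ is a super-solution while $U^-_{\tau,\mu,C_0}$ is a (generalized) sub-solution of \eqref{ODE2}.

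Next I would set up the fixed-point map $\Phi$. For $u\in\mathcal E_{\tau,\mu}(C_0)$ the chemoattractant profile $V(\cdot;u)$ is frozen by \eqref{Inverse of u}, so \eqref{ODE2} is a genuine parabolic equation for which the comparison principle holds. Running its parabolic flow from the super-solution $U^+_{\tau,\mu,C_0}$, Theorem \ref{super-sub-solu-thm}(1)--(2) forces the solution to be nonincreasing in $t$ and trapped between the sub- and super-solutions; passing to $t\to\infty$ yields a stationary solution $\Phi(u)$ of \eqref{ODE2} with $U^-_{\tau,\mu,C_0}\le\Phi(u)\le U^+_{\tau,\mu,C_0}$, so $\Phi$ maps $\mathcal E_{\tau,\mu}(C_0)$ into itself. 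Here the lower comparison uses that $U^-_{\tau,\mu,C_0}$ is a sub-solution on $(\underline a_{\mu,\tilde\mu,d},\infty)$ by part (3), that it vanishes and is trivially dominated to the left of $\underline a_{\mu,\tilde\mu,d}$, and that the flow stays nonnegative. The uniform $C^{2,b}_{\rm unif}$ bounds on $V(\cdot;u)$ in \eqref{Estimates on Inverse of V}, interior parabolic/elliptic estimates, and the Arzel\`a--Ascoli theorem give continuity and compactness of $\Phi$ (working, if needed, in the topology of locally uniform convergence on $\mathcal E_{\tau,\mu}(C_0)$, together with the uniform tail decay of $U^+_{\tau,\mu,C_0}$ at $+\infty$). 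Schauder's fixed point theorem then produces $U=\Phi(U)$, a stationary solution of \eqref{reduced-eq2}.

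It remains to read off the two boundary limits. The right-hand asymptotics are immediate from the trapping: for $x$ large, $U^-_{\tau,\mu,C_0}(x)=e^{-\mu x}(1-d\,e^{-(\tilde\mu-\mu)x})$ and $U^+_{\tau,\mu,C_0}(x)=e^{-\mu x}$, so dividing by $e^{-\mu x}$ and squeezing gives $U(x)/e^{-\mu x}\to1$ as $x\to\infty$. The left-hand limit $U(-\infty)=a/b$ is the genuinely delicate point, since near $-\infty$ the super-solution only yields $U\le C_0$ while $U^-_{\tau,\mu,C_0}$ degenerates to $0$. To handle it I would first invoke Theorem \ref{super-sub-solu-thm}(4): the positive constant $U^-_{\tau,\mu,\tilde C_0}(x_\delta)$ is a sub-solution on all of $\R$, which upon comparison on left half-lines furnishes $\liminf_{x\to-\infty}U(x)>0$. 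With $U$ bounded and bounded away from $0$ near $-\infty$, I would run a squeezing argument paralleling the proof of Theorem B: writing $\bar u=\limsup_{x\to-\infty}U(x)$ and $\underline u=\liminf_{x\to-\infty}U(x)$, the pointwise control of $V(\cdot;U)$ and $V'(\cdot;U)$ as $x\to-\infty$ (through Lemmas \ref{Mainlem2}--\ref{Mainlem3} and the estimates behind \eqref{e3}--\eqref{e4}), inserted into $\mathcal LU=0$ via translation limits, reproduces the inequalities \eqref{e6} and \eqref{e8}; the structural hypothesis $b-\chi-\chi(1+\tau c_\mu)>0$ contained in \eqref{Eq01_Th1} then forces $\bar u=\underline u=a/b$.

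The main obstacle I anticipate is precisely this last step. Because there is no comparison principle for the full system and the sub-solution degenerates at $-\infty$, the connection to $a/b$ cannot be obtained by direct trapping and must instead be extracted from the stationary equation by the $\liminf/\limsup$ analysis, which is exactly where the inequality $b>\chi(2+\tau c_\mu)$ built into \eqref{Eq01_Th1} is consumed. A secondary technical point is the regularity required to justify the comparison and the passage to the stationary limit at the corner of $U^-_{\tau,\mu,C_0}$ (where it is merely Lipschitz); I would treat these by working with generalized sub-solutions and standard parabolic compactness.
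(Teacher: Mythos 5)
Your proposal is correct and follows essentially the same route as the paper: fixing the parameters of Theorem \ref{super-sub-solu-thm}, defining the self-map of $\mathcal{E}_{\tau,\mu}(C_0)$ by running the monotone parabolic flow of \eqref{ODE2} from the super-solution $U^{+}_{\tau,\mu,C_0}$, applying Schauder in the topology of locally uniform convergence, squeezing between $U^{-}_{\tau,\mu,C_0}$ and $U^{+}_{\tau,\mu,C_0}$ for the decay at $+\infty$, and using the constant sub-solution plus translation limits together with the Theorem B mechanism (which is exactly where $b-\chi-\chi(1+\tau c_\mu)>0$ from \eqref{Eq01_Th1} is consumed) to identify the limit $a/b$ at $-\infty$. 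The only cosmetic difference is that the paper phrases the last step as a contradiction argument applying Theorem B directly to a translated limit profile, rather than re-deriving the $\liminf/\limsup$ inequalities, but the content is the same.
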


Our key idea to prove the above theorem is to prove that, for any $\tau>0$, $0<\mu< \min\{\sqrt{a},\sqrt{\frac{1+\tau a}{(1-\tau)_+}}\}$ and $0<\chi<1$ satisfying \eqref{Eq01_Th1}, there is $u^*(\cdot)\in\mathcal{E}_{\tau,\mu}(\tilde{C}_0)$ such that $(U(\cdot),V(\cdot))=(u^*(\cdot),V(\cdot;u^*))$ is a stationary solution of \eqref{reduced-eq2} with $c=c_\mu$, where
 $V(\cdot;u^*)$ is given by \eqref{Inverse of u}, and $u^*(-\infty)=\frac{a}{b}$ and $u^*(\infty)=0$, which implies that
$(u(x,t),v(x,t))=(u^*(x-c_\mu t),V(x-c_\mu t;u^*))$ is a traveling wave solution of \eqref{main-eq1} connecting $(\frac{a}{b},\frac{a}{b})$ and $(0,0)$
with speed $c=c_\mu$.

In order to prove Theorem \ref{existence-tv-thm}, we first prove some lemmas. These Lemmas extend some of the results established in \cite{SaSh2}, so some details might be omitted in their proofs. The reader is referred to the proofs of Lemmas 3.2, 3.3, 3.5 and 3.6 in \cite{SaSh2} for more details.

 In the remaining part of this subsection we shall suppose that \eqref{Eq01_Th1}  holds and $\tilde \mu$ is fixed, where $\tilde \mu$ satisfies
   $$\mu<\tilde{\mu}<\min\{\sqrt{a},\sqrt{\frac{1+\tau a}{(1-\tau)_+}},2\mu,\mu+\frac{ 1}{\mu+\sqrt{1+\tau\mu c_{\mu}- \mu^2}}\}.
    $$
    Furthermore, we choose $C_{0}=\tilde{C}_{0}$ and $d=d_0(\tau,\mu,\chi,\tilde{C}_0)$ to be the constants  given by Theorem \ref{super-sub-solu-thm}
     and to be fixed. Fix $u\in\mathcal{E}_{\tau,\mu}(C_0)$. For given $u_0\in C_{\rm unif}^b(\R)$, let
  $U(x,t;u_0)$ be the solution of \eqref{ODE2} with
$U(x,0;u_0)=u_0(x)$. By the arguments in the proof of Theorem 1.1 and Theorem 1.5 in \cite{SaSh}, we have $U(x,t; U_{\tau,\mu,C_0}^+)$ exists for all $t>0$ and
${ U(\cdot,\cdot;{ U_{\tau,\mu,C_0}^+})}\in C([0,\infty),C^{b}_{\rm unif}(\R))\cap C^{1}((0\ ,\ \infty),C^{b}_{\rm unif}(\R))\cap C^{2,1}(\R\times(0,\ \infty))$ satisfying
\begin{equation}
U(\cdot,\cdot; U_{\tau,\mu,C_0}^+), U_{x}(\cdot,\cdot; U_{\tau,\mu,C_0}^+),U_{xx}(\cdot,t; U_{\tau,\mu,C_0}^+),U_{t}(\cdot,\cdot; U_{\tau,\mu,C_0}^+)\in  C^{\theta}((0, \infty),C_{\rm unif}^{b,\nu}(\R))
\end{equation}
for $0<\theta, \nu \ll 1$.

\begin{lem} \label{lm1}
Assume that $\tau>0$, $0<\mu<\min\{\sqrt{a},\sqrt{\frac{1+\tau a}{(1-\tau)_+}}\}$, $0<\chi<1$ satisfy
\eqref{Eq01_Th1}.
 Then for every $u\in \mathcal{E}_{\tau,\mu}(C_0)$, the following hold.
\begin{description}
\item[(i)] $0\leq U(\cdot,t; U_{\tau,\mu,C_0}^+)\leq U_{ \tau,\mu,C_0}^{+}(\cdot)$ for every $t\geq 0.$
\item[(ii)] $U(\cdot,t_{2}; U_{\tau,\mu,C_0}^+)\leq U(\cdot,t_{1}; U_{\tau,\mu,C_0}^+) $ for every $0\leq t_{1}\leq t_{2}$.
\end{description}
\end{lem}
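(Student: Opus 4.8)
The plan is to treat \eqref{ODE2}, with the fixed $u\in\mathcal{E}_{\tau,\mu}(C_0)$, as an \emph{autonomous} semilinear parabolic equation
\[
U_t=U_{xx}+\big(c_\mu-\chi V'(x;u)\big)U_x+\big(a-\chi(V(x;u)-\tau c_\mu V'(x;u))-(b-\chi)U\big)U,
\]
whose coefficients depend on $x$ only, since $u$ is frozen. By \eqref{Estimates on Inverse of V} the drift $c_\mu-\chi V'(x;u)$ and the zeroth-order coefficient $a-\chi(V(x;u)-\tau c_\mu V'(x;u))$ are bounded and uniformly continuous in $x$, while the reaction term $U\mapsto(\,\cdots-(b-\chi)U)U$ is locally Lipschitz in $U$; hence the comparison principle for parabolic equations applies to any pair of ordered bounded sub- and super-solutions. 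I would use this comparison principle as the only real tool.

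For part (i), the lower bound $0\le U(\cdot,t;U_{\tau,\mu,C_0}^{+})$ is immediate: the constant $0$ solves the equation and $U_{\tau,\mu,C_0}^{+}\ge 0$, so comparison gives nonnegativity for all $t\ge 0$. For the upper bound I recall that here $C_0=\tilde C_0$, so by Theorem \ref{super-sub-solu-thm}(1) the constant $\tilde C_0$ and by Theorem \ref{super-sub-solu-thm}(2) the function $\varphi_{\tau,\mu}$ are both super-solutions of \eqref{ODE2} on $\R\times\R$. Since $U_{\tau,\mu,C_0}^{+}=\min\{\tilde C_0,\varphi_{\tau,\mu}\}$ and the pointwise minimum of two super-solutions is again a super-solution, $U_{\tau,\mu,C_0}^{+}$ is a super-solution whose value at $t=0$ equals the initial datum of $U(\cdot,\cdot;U_{\tau,\mu,C_0}^{+})$; comparison then yields $U(\cdot,t;U_{\tau,\mu,C_0}^{+})\le U_{\tau,\mu,C_0}^{+}$ for all $t\ge 0$.

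For part (ii) I would exploit the autonomy in $t$. Fixing $h>0$ and setting $W(x,t):=U(x,t+h;U_{\tau,\mu,C_0}^{+})$, the function $W$ solves the same equation \eqref{ODE2}, and at the initial time part (i) gives $W(x,0)=U(x,h;U_{\tau,\mu,C_0}^{+})\le U_{\tau,\mu,C_0}^{+}(x)=U(x,0;U_{\tau,\mu,C_0}^{+})$. Applying comparison to the ordered initial data of $W$ and $U(\cdot,\cdot;U_{\tau,\mu,C_0}^{+})$ produces $U(x,t+h;U_{\tau,\mu,C_0}^{+})\le U(x,t;U_{\tau,\mu,C_0}^{+})$ for every $t\ge0$ and $h>0$, which is exactly the claimed monotonicity $U(\cdot,t_2;U_{\tau,\mu,C_0}^{+})\le U(\cdot,t_1;U_{\tau,\mu,C_0}^{+})$ for $0\le t_1\le t_2$.

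The main obstacle is the rigorous comparison in part (i) against the \emph{non-smooth} super-solution $U_{\tau,\mu,C_0}^{+}$, which has a corner at $x=-\ln(\tilde C_0)/\mu$ where $\tilde C_0$ and $\varphi_{\tau,\mu}$ cross. I would dispatch this by the standard device of splitting $\R$ into the two half-lines on which $U_{\tau,\mu,C_0}^{+}$ coincides with a classical smooth super-solution and noting that the kink there is concave (a downward jump in the first derivative from the flat piece to the decreasing exponential), so the distributional second derivative contributes a nonpositive singular part and $U_{\tau,\mu,C_0}^{+}$ is a super-solution across the junction; alternatively one may approximate $\min\{\tilde C_0,\varphi_{\tau,\mu}\}$ from above by smooth super-solutions and pass to the limit. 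All remaining regularity needed to run the maximum principle for $U(\cdot,\cdot;U_{\tau,\mu,C_0}^{+})$ is already recorded in the displayed regularity statement preceding the lemma, so the rest is routine.
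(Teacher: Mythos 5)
Your proposal is correct and follows essentially the same route as the paper: both parts rest on Theorem \ref{super-sub-solu-thm}(1)--(2), the comparison principle, and (for part (ii)) the autonomy of \eqref{ODE2} in $t$ — your time-shift $W(x,t)=U(x,t+h;U^+_{\tau,\mu,C_0})$ is just the paper's semigroup identity $U(\cdot,t_2;U^+_{\tau,\mu,C_0})=U(\cdot,t_1;U(\cdot,t_2-t_1;U^+_{\tau,\mu,C_0}))$ in different notation. The one place you work harder than necessary is the upper bound in (i): rather than comparing against the non-smooth super-solution $\min\{\tilde C_0,\varphi_{\tau,\mu}\}$ (which forces you to deal with the corner via the concave-kink or approximation argument), the paper simply applies comparison twice against the two \emph{smooth} super-solutions separately — since $U^+_{\tau,\mu,C_0}\le \tilde C_0$ one gets $U(\cdot,t;U^+_{\tau,\mu,C_0})\le \tilde C_0$, and since $U^+_{\tau,\mu,C_0}\le\varphi_{\tau,\mu}$ one gets $U(\cdot,t;U^+_{\tau,\mu,C_0})\le\varphi_{\tau,\mu}$ — and then takes the minimum of the two conclusions, which sidesteps the regularity issue entirely. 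Your handling of the corner is nevertheless sound, so this is a difference of economy rather than of substance.
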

\begin{proof}
(i)   Note that $U^{+}_{\tau,\mu,C_0}(\cdot)\leq  C_0 $. Then by
comparison principle for parabolic equations and Theorem \ref{super-sub-solu-thm}(1), we have
\begin{equation*}
U(x,t; U_{\tau,\mu,C_0}^+)\leq  C_0\quad \forall\ x\in\R,\ t\geq 0.
\end{equation*}

Similarly, note that $U_{\tau,\mu,C_0}^+(x)\le\varphi_{\tau,\mu}(x)$.  Then by  comparison principle for parabolic equations and
Theorem \ref{super-sub-solu-thm}(2)  again, we have
\begin{equation*}
U(x,t;U_{\tau,\mu,C_0}^+)\leq \varphi_{\tau,\mu}(x) \ \quad \forall\ x\in\R,\ t\geq 0.
\end{equation*}
Thus $U(\cdot,t;U_{\tau,\mu,C_0}^+)\leq U^{+}_{\tau,\mu,C_0}$. This complete of (i).

(ii)  For $0\leq t_{1}\leq t_{2}$, since
$$
U(\cdot,t_{2};U_{\tau,\mu,C_0}^+)=U(\cdot,t_{1},U(\cdot,t_{2}-t_{1};U_{\tau,\mu,C_0}^+))
$$
and by (i), $U(\cdot,t_{2}-t_{1};U_{\tau,\mu,C_0}^+)\leq U^{+}_{\tau,\mu,C_0} $, (ii) follows from comparison principle for parabolic equations.
\end{proof}

Let us define $U(x; u)$ to be
\begin{equation}
\label{U-eq}
{
U(x; u)=\lim_{t\to\infty}U(x,t; U^{+}_{ \tau,\mu,C_0})=\inf_{t>0}U(x,t; U^{+}_{ \tau,\mu,C_0}).
}
\end{equation}
 By the a priori estimates for parabolic equations, the limit in \eqref{U-eq} is uniform in $x$ in compact subsets of $\R$
and $U(\cdot;u)\in C_{\rm unif}^b(\R)$.
Next we prove that the function $u\in \mathcal{E}_{\tau,\mu}(C_0)\to U(\cdot;u)\in\mathcal{E}_{\tau,\mu}(C_0)$.

\begin{lem}\label{lm2}
Assume that $\tau>0$, $0<\mu<\min\{\sqrt{a},\sqrt{\frac{1+\tau a}{(1-\tau)_+}}\}$, $0<\chi<1$ satisfy
\eqref{Eq01_Th1}. Then,
\begin{equation}
U(x;u)\geq\begin{cases}  U^{-}_{{ \tau,\mu,C_0}}(x),\quad x\ge \underline{a}_{\mu,\tilde{\mu},d}\cr
U_{{ \tau,\mu,C_0}}^-(x_\delta),\quad x\le x_\delta=\underline{a}_{\mu,\tilde{\mu},d}+\delta
\end{cases}
\end{equation}\label{Eq2 of Th2}
for every $u\in\mathcal{E}_{\tau,\mu}(C_0)$,  $t\geq 0$, and $0<\delta\ll 1$.
\end{lem}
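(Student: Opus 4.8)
The plan is to derive both lower bounds by comparing the genuine solution $U(x,t;U^+_{\tau,\mu,C_0})$ of \eqref{ODE2} against the sub-solutions produced in Theorem \ref{super-sub-solu-thm}(3) and (4). For fixed $u\in\mathcal{E}_{\tau,\mu}(C_0)$ the function $V(\cdot;u)$ is fixed, so \eqref{ODE2} is a single scalar parabolic equation with bounded, time-independent coefficients, and $U(\cdot,\cdot;U^+_{\tau,\mu,C_0})$ is an honest solution (hence simultaneously a sub- and super-solution). Two structural facts drive everything: by Lemma \ref{lm1}(i) the solution stays in $[0,U^+_{\tau,\mu,C_0}]$, and by Lemma \ref{lm1}(ii) it is non-increasing in $t$, so the limit $U(\cdot;u)$ of \eqref{U-eq} is in fact the infimum over $t$. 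Consequently any pointwise lower bound that holds for $U(x,t;U^+_{\tau,\mu,C_0})$ for all $t\ge 0$ is automatically inherited by $U(x;u)$ after letting $t\to\infty$.

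For the range $x\ge\underline{a}_{\mu,\tilde{\mu},d}$, I would use that $U^{-}_{\tau,\mu,C_0}$ is a sub-solution of \eqref{ODE2} on $O:=(\underline{a}_{\mu,\tilde{\mu},d},\infty)\times\R$ by Theorem \ref{super-sub-solu-thm}(3). Its initial profile lies below that of the solution, since $U^{-}_{\tau,\mu,C_0}\le U^{+}_{\tau,\mu,C_0}=U(\cdot,0;U^+_{\tau,\mu,C_0})$ by construction of $\mathcal{E}_{\tau,\mu}(C_0)$; and at the left endpoint $U^{-}_{\tau,\mu,C_0}(\underline{a}_{\mu,\tilde{\mu},d})=0\le U(\underline{a}_{\mu,\tilde{\mu},d},t;U^+_{\tau,\mu,C_0})$ by nonnegativity. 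A comparison principle on the half-line $O$ then yields $U^{-}_{\tau,\mu,C_0}(x)\le U(x,t;U^+_{\tau,\mu,C_0})$ for all $x\ge\underline{a}_{\mu,\tilde{\mu},d}$ and $t\ge 0$, and letting $t\to\infty$ gives the first asserted inequality.

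For the range $x\le x_\delta$, set $m:=U^{-}_{\tau,\mu,C_0}(x_\delta)>0$. By Theorem \ref{super-sub-solu-thm}(4) the constant function $m$ is a sub-solution of \eqref{ODE2} on all of $\R\times\R$, so I would run the comparison on the complementary half-line $(-\infty,x_\delta)$. On the initial slice $U^{+}_{\tau,\mu,C_0}(x)\ge U^{+}_{\tau,\mu,C_0}(x_\delta)\ge U^{-}_{\tau,\mu,C_0}(x_\delta)=m$ for $x\le x_\delta$, using that $U^{+}_{\tau,\mu,C_0}$ is non-increasing and dominates $U^{-}_{\tau,\mu,C_0}$. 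On the right endpoint, the bound already established in the previous paragraph together with monotonicity in $t$ gives $U(x_\delta,t;U^+_{\tau,\mu,C_0})\ge U(x_\delta;u)\ge U^{-}_{\tau,\mu,C_0}(x_\delta)=m$ for every $t\ge 0$. Comparison on $(-\infty,x_\delta)$ then forces $m\le U(x,t;U^+_{\tau,\mu,C_0})$ for $x\le x_\delta$, and passing to the limit $t\to\infty$ finishes the second inequality.

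The delicate point is the comparison principle on the unbounded spatial domains $O$ and $(-\infty,x_\delta)$, where one cannot simply invoke the bounded-domain maximum principle. I would handle this by noting that all relevant functions are uniformly bounded (Lemma \ref{lm1}(i)) and all coefficients of \eqref{ODE2} are bounded by the estimates \eqref{Estimates on Inverse of u}--\eqref{Estimates on Inverse of V}; linearizing the logistic term using the nonnegativity and boundedness of both competitors produces a bounded zeroth-order coefficient, after which the Phragmén--Lindelöf form of the maximum principle for bounded solutions applies. This is exactly the mechanism used in the proofs of Lemmas 3.5 and 3.6 of \cite{SaSh2}, to which the details can be deferred.
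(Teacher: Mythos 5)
Your proposal is correct and follows essentially the same route as the paper: a comparison on the half-line $(\underline{a}_{\mu,\tilde{\mu},d},\infty)$ against the sub-solution $U^{-}_{\tau,\mu,C_0}$ from Theorem \ref{super-sub-solu-thm}(3) with zero boundary data at the left endpoint, followed by a comparison on $(-\infty,x_\delta)$ against the constant sub-solution $U^{-}_{\tau,\mu,C_0}(x_\delta)$ from Theorem \ref{super-sub-solu-thm}(4), where the boundary value at $x_\delta$ is supplied by the first step, and then passage to the limit $t\to\infty$ via \eqref{U-eq}. Your explicit remark on justifying the comparison principle on unbounded domains through boundedness of the competitors and the Phragm\'en--Lindel\"of maximum principle is a detail the paper leaves implicit, but it does not change the argument.
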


\begin{proof} Let $u\in \mathcal{E}_{\tau,\mu}(C_0)$ be fixed.  Let $O=(\underline{a}_{\mu,\tilde{\mu},d},\infty)$.
Note that $U_{\tau,\mu,C_0}^-(\underline{a}_{\mu,\tilde{\mu},d})=0$. By Theorem \ref{super-sub-solu-thm}(3),
$U_{\tau,\mu,C_0}^-(x)$ is a sub-solution of \eqref{ODE2} on $O\times (0,\infty)$.
Note also that $U_{\tau,\mu,C_0}^+(x)\ge U_{\tau,\mu,C_0}^-(x)$ for $x\ge \underline{a}_{\mu,\tilde{\mu},d}$ and $U(\underline{a}_{\mu,\tilde{\mu},d},t;U_{\tau,\mu,C_0}^+))>0$
for all $t\ge 0$. Then by comparison principle for parabolic equations, we have that
$$
U(x,t;U_{\tau,\mu,C_0}^+)\ge U_{\tau,\mu,C_0}^-(x)\quad \forall \,\, x\ge \underline{a}_{\mu,\tilde{\mu},d},\,\, t\ge 0.
$$

Now for any $0<\delta\ll 1$, by Theorem \ref{super-sub-solu-thm}(3), $U(x,t)=U_{\tau,\mu,C_0}^-(x_\delta)$ is a sub-solution of
\eqref{ODE2} on $\R\times R$. Note that $U_{\tau,\mu,C_0}^+(x)\ge U_\mu^-(x_\delta)$ for $x\le x_\delta$ and
$U(x_\delta,t;U_{\tau,\mu,C_0}^+)\ge U_{\tau,\mu,C_0}^-(x_\delta)$ for $t\ge 0$. Then by comparison principle for parabolic equations again,
$$
U(x,t;U_{\tau,\mu,C_0}^+)\ge U_{\tau,\mu,C_0}^-(x_\delta)\quad \forall\,\, x\le x_\delta,\, \, t>0.
$$
The lemma then follows.
\end{proof}

\begin{rk}\label{Remark-lower-bound-for -solution}
 It follows from Lemmas \ref{lm1} and \ref{lm2} that if \eqref{Eq01_Th1} holds, then
$$
U_{\tau,\mu,C_0,\delta}^{-}(\cdot)\leq U(\cdot,t;U_{\tau,\mu,C_0}^+)\leq U^{+}_{\tau,\mu,C_0}(\cdot)$$
for every $u\in\mathcal{E}_{\tau,\mu}(C_0)$, $t\geq0$ and $0\le \delta\ll 1$, where
$$
U_{\tau,\mu,C_0,\delta}^-(x)=\begin{cases}  U^{-}_{\mu}(x),\quad\ x\ge \underline{a}_{\mu,\tilde{\mu},d} + \delta,\cr
U_\mu^-(x_\delta),\quad x\le x_\delta=\underline{a}_{\mu,\tilde{\mu},d}+\delta.
\end{cases}
$$
 This implies that $$
U_{\tau,\mu,C_0,\delta}^{-}(\cdot)\leq U(\cdot;u)\leq U^{+}_{\tau,\mu,C_0}(\cdot)$$
for every $u\in\mathcal{E}_{\tau,\mu}(C_0)$. Hence  $u\in\mathcal{E}_{\tau,\mu}(C_0)\mapsto U(\cdot;u)\in \mathcal{E}_{\tau,\mu}(C_0).$
\end{rk}

\medskip

\begin{lem}\label{MainLem02}
Assume that $\tau>0$, $0<\mu<\min\{\sqrt{a},\sqrt{\frac{1+\tau a}{(1-\tau)_+}}\}$, $0<\chi<1$ satisfy
\eqref{Eq01_Th1}. Then for every $u\in \mathcal{E}_{\tau,\mu}(C_0)$ the associated function $U(\cdot;u)$ satisfied the elliptic equation,
\begin{equation}\label{Eq_MainLem02}
0=U_{xx}+(c_{\mu}-\chi V'(x;u))U_{x}+(a-\chi( V(x;u)-\tau c_\mu V'(\cdot,u))-(b-\chi)U)U\quad \forall\,\, x\in\R.
\end{equation}
\end{lem}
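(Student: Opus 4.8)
The plan is to exploit two features of the construction: equation \eqref{ODE2} is \emph{autonomous in time} (its coefficients $V(\cdot;u)$ and $V'(\cdot;u)$ depend only on the fixed $u\in\mathcal{E}_{\tau,\mu}(C_0)$ and not on $t$), and the limit in \eqref{U-eq} arises from a monotone decreasing family. The idea is to pass to the limit $t\to\infty$ inside the parabolic equation and to show that the time derivative drops out, leaving precisely the elliptic equation \eqref{Eq_MainLem02}.

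First I would introduce the time-shifted functions $U^{n}(x,t):=U(x,t+n;U^{+}_{\tau,\mu,C_0})$ for $n\in\N$. Since \eqref{ODE2} is autonomous in $t$, each $U^{n}$ solves \eqref{ODE2} on $\R\times(-n,\infty)$ with the very same coefficients. By Lemma \ref{lm1}(i) the family $\{U^{n}\}$ is uniformly bounded by $C_0$, while the coefficients $c_{\mu}-\chi V'(\cdot;u)$ and $a-\chi(V(\cdot;u)-\tau c_\mu V'(\cdot;u))$ are bounded and H\"older continuous by \eqref{Estimates on Inverse of V} together with $V(\cdot;u)\in C^{2,b}_{\rm unif}(\R)$. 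Hence the interior parabolic Schauder estimates --- which are exactly the estimates recorded just before Lemma \ref{lm1}, namely $U,U_x,U_{xx},U_t\in C^{\theta}((0,\infty),C^{b,\nu}_{\rm unif}(\R))$ --- hold uniformly on $\R\times[1,\infty)$. Consequently, on any fixed compact set $K\times[-T,T]$ the functions $U^{n}$ (for $n>T+1$) correspond to the original solution at times $\ge 1$, so $\{U^{n},U^{n}_x,U^{n}_{xx},U^{n}_t\}$ is uniformly bounded and equicontinuous there. By the Arzel\`a--Ascoli theorem a subsequence of $\{U^{n}\}$ converges in $C^{2,1}_{\rm loc}(\R\times\R)$ to some $\bar U$.

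Next I would identify this limit. By Lemma \ref{lm1}(ii) the map $t\mapsto U(x,t;U^{+}_{\tau,\mu,C_0})$ is nonincreasing and, by \eqref{U-eq}, converges pointwise (uniformly on compacts in $x$) to $U(x;u)$ as $t\to\infty$. Thus for each fixed $(x,t)$ we have $U^{n}(x,t)=U(x,t+n;U^{+}_{\tau,\mu,C_0})\to U(x;u)$ as $n\to\infty$. Since $C^{2,1}_{\rm loc}$ convergence forces pointwise convergence, $\bar U$ coincides with $U(\cdot;u)$ and is in particular independent of $t$, so $\bar U_t\equiv 0$. Because convergence holds in $C^{2,1}_{\rm loc}$, I may pass to the limit in \eqref{ODE2} term by term to obtain
\begin{equation*}
0=\bar U_t=\bar U_{xx}+(c_{\mu}-\chi V'(\cdot;u))\bar U_{x}+(a-\chi(V(\cdot;u)-\tau c_\mu V'(\cdot;u))-(b-\chi)\bar U)\bar U,
\end{equation*}
which after substituting $\bar U=U(\cdot;u)$ is exactly \eqref{Eq_MainLem02}. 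Uniqueness of the pointwise limit shows the whole sequence converges, so no further subsequence extraction is needed.

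The step I expect to be most delicate is justifying that the time derivative vanishes in the limit, that is $\bar U_t\equiv 0$ rather than merely $\bar U_t\le 0$ (which is all that monotonicity gives). This is what forces genuine $C^{2,1}_{\rm loc}$ Schauder-type compactness instead of bare pointwise or $L^\infty$ convergence: one needs equicontinuity of the $U^{n}_t$ in order to identify $\bar U_t=\lim_{n}U^{n}_t$ and then to read off, from the $t$-independence of $\bar U$, that this common value is $0$. The required equicontinuity is precisely the H\"older regularity $U_t\in C^{\theta}((0,\infty),C^{b,\nu}_{\rm unif}(\R))$ already available in the excerpt, so once this is invoked the remaining passages to the limit are routine.
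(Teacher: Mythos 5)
Your proposal is correct and follows essentially the same route as the paper: the paper likewise considers the time shifts $U_n(x,t)=U(x,t+t_n;u)$ of the (time-autonomous) equation \eqref{ODE2}, obtains uniform bounds and equicontinuity (via the variation-of-constants formula in the fractional power spaces $X^\beta$ together with Friedman's interior estimates, rather than by quoting Schauder estimates directly), extracts a subsequence converging in $C^{2,1}_{\rm loc}$, and identifies the limit with the time-independent function $U(\cdot;u)$ so that the time derivative vanishes and \eqref{Eq_MainLem02} follows. The only difference is the technical packaging of the compactness step, not the argument itself.
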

\begin{proof} The following arguments generalized the arguments used in the proof of Lemma 4.6 in \cite{SaSh2} to cover the case  $\tau>0$. Hence we refer to \cite{SaSh2} for the proofs of the estimates stated below.

 Let $\{t_{n}\}_{n\geq 1}$ be an increasing sequence of positive real numbers converging to $\infty$. For every $n\geq 1$, define $U_{n}(x,t)=U(x,t+t_{n};u)$ for every $x\in\R, \ t\geq 0$.
For every $n$, $U_{n}$ solves the PDE
\begin{equation*}
\begin{cases}
\partial_{t}U_{n}=\partial_{xx}U_{n}+(c_{\mu}-\chi V'(x;u))\partial_{x}U_{n}+(1-\chi (V(x;u)-\tau c_\mu V'(\cdot,u))-(1-\chi)U_{n})U_{n}\ \ x\in\R\\
U_{n}(\cdot,0)=U(\cdot,t_{n};u).
\end{cases}
\end{equation*}

Let $\{T(t)\}_{t\geq 0}$ be the analytic semigroup on $C^{b}_{\rm unif}(\R)$ generated by $\Delta-I$
 and  let $X^{\beta}={\rm Dom}((I-\Delta)^{\beta})$ be the fractional power spaces of $I-\Delta$ on $C_{\rm unif}^b(\R)$ ($\beta\in [0,1]$).

 The variation of constant formula and the fact that $V''(x;u)+\tau c_{\mu}V'(x;u)-V(x;u)=-u(x)$ yield that
\begin{eqnarray}\label{variation -of-const}
U(\cdot,t;u)
&=& \underbrace{T(t)U_{\mu}^{+}}_{I_{1}(t)}+ \underbrace{\int_{0}^{t}T(t-s)(((c_{\mu}-\chi V'(\cdot;u)U)_{x})ds}_{I_{2}(t)} +\underbrace{\int_{0}^{t}T(t-s)(1+a-\chi u)U(\cdot,s;u)ds}_{I_{3}(t)}\nonumber \\
& &-(b-\chi)\underbrace{\int_{0}^{t}T(t-s)U^{2}(\cdot,s;u)ds}_{I_{4}(t)}.
\end{eqnarray}
Let $0<\beta<\frac{1}{2}$ be fixed. There is a positive constant $C_{\beta}$, $C_{\beta}=C_{\beta}(N)$ such that
\begin{equation*}
\|I_{1}(t)\|_{X^{\beta}}\leq \frac{aC_\beta t^{-\beta}e^{-t}}{b-\chi(1+\frac{\tau c_\mu}{2})},\quad \|I_{2}(t)\|_{X^{\beta}}\leq  \frac{aC_{\beta}}{b-\chi(1+\frac{\tau c_\mu}{2})}(c_{\mu}+\frac{a\chi}{b-\chi(1+\frac{\tau c_\mu}{2})})\Gamma(\frac{1}{2}-\beta),
\end{equation*}
and
$$
\|I_{3}(t)\|_{X^{\beta}}
 \leq  \frac{aC_{\beta}((1+a)(b-\chi(1+\frac{\tau c_\mu}{2})) +a\chi)}{(b-\chi(1+\frac{\tau c_\mu}{2}))^2}\Gamma(1-\beta), \ \ \|I_{4}(t)\|_{X^{\beta}}\leq \frac{a^2 C_{\beta}}{(b-\chi(1+\frac{\tau c_\mu}{2}))^{2}}\Gamma(1-\beta).
$$
Therefore, for every $T>0$ we have that
\begin{equation}\label{Eq_Convergence01}
\sup_{t\geq T}\|U(\cdot,t;u)\|_{X^{\beta}}\leq M_{T}<\infty,
\end{equation}
where
\begin{equation}\label{Eq_Conv02}
M_{T}= \frac{aC_{\beta}}{b-\chi(1+\frac{\tau c_\mu}{2})}\Big[T^{-\beta}e^{-T}+ ( c_{\mu}+  \frac{(1+2a)(b+1)+\chi}{b-\chi(1+\frac{\tau c_{\mu}}{2})})( \Gamma(1-\beta)+\Gamma(\frac{1}{2}-\beta))\Big].
\end{equation}
Hence it follows that
\begin{equation}\label{Eqq000}
\sup_{n\geq 1, t\geq 0}\|U_{n}(\cdot,t)\|_{X^{\beta}}\leq M_{t_{1}}<\infty.
\end{equation}

Next, for every $t,h\geq 0$ and $n\geq 1$, we have that
\begin{equation}\label{Eqq00}
\|I_{1}(t+h+t_{n})-I_{1}(t+t_{n})\|_{X^{\beta}}\leq C_{\beta}h^{\beta}(t+t_{n})^{-\beta}e^{-(t+t_n)}\|U_{\mu}^{+}\|_{\infty}\leq C_{\beta}h^{\beta}t_{1}^{-\beta}e^{-t_1}\|U_{\mu}^{+}\|_{\infty},
\end{equation}
\begin{align}\label{Eqq02}
&\|I_{2}(t+t_n+h)-I_{2}(t+t_n)\|_{X^{\beta}}\nonumber\\
&\leq \frac{aC_{\beta}}{b-\chi(1+\frac{\tau c_\mu}{2})}(c_\mu+\frac{a\chi}{b-\chi(1+\frac{\tau c_\mu}{2})})\Big[h^{\beta}\Gamma(1-\beta)+\frac{h^{\frac{1}{2}-\beta}}{\frac{1}{2}-\beta} \Big] ,
\end{align}
\begin{align}\label{Eqq01}
&\|I_{3}(t+h+t_n)-I_{3}(t+t_n)\|_{X^{\beta}}\nonumber\\
&\leq \frac{aC_{\beta}((1+a)(b-\chi(1+\frac{\tau c_\mu}{2})) +a\chi)}{(b-\chi(1+\frac{\tau c_\mu}{2}))^2}\Big[h^{\beta}\Gamma(1-\beta)+\frac{h^{1-\beta}}{1-\beta} \Big] ,
\end{align}
and
\begin{eqnarray}\label{Eqq03}
\|I_{4}(t+t_n+h)-I_{4}(t+t_n)\|_{X^{\beta}}\leq\frac{a^2 C_{\beta}}{(b-\chi(1+\frac{\tau c_\mu}{2}))^2}\Big[h^{\beta}\Gamma(1-\beta)+\frac{h^{1-\beta}}{1-\beta} \Big].
\end{eqnarray}
It follows from inequalities \eqref{Eqq000}, \eqref{Eqq00}, \eqref{Eqq01}, \eqref{Eqq02} and \eqref{Eqq03}, the functions $U_{n} : [0, \infty)\to X^{\beta}$ are uniformly bounded and equicontinuous.
Since $X^{\beta}$ is continuously imbedded in $C^{b,\nu}_{\rm unif}(\R)$ for every $0\leq \nu<2\beta$ (See \cite{Dan Henry}),
therefore, the Arzela-Ascoli Theorem and Theorem 3.15 in   \cite{Friedman}, imply that there is a function $\tilde{U}(\cdot,\cdot;u)\in C^{2,1}(\R\times(0,\infty))$ and a subsequence $\{U_{n'}\}_{n\geq 1}$ of $\{U_{n}\}_{n\geq 1}$ such that $U_{n'}\to \tilde{U}$ in $C^{2,1}_{loc}(\R\times(0, \infty))$ as $n\to \infty$ and $\tilde{U}(\cdot,\cdot;u)$ solves the PDE
$$
\begin{cases}
\partial_{t}\tilde{U}=\partial_{xx}\tilde{U}+(c_{\mu}-\chi V'(\cdot;u))\partial_{x}\tilde{U}+(a-\chi( V(\cdot;u) -\tau V'(\cdot;u))-(b-\chi)\tilde{U})\tilde{U}\ \ x\in \R\ , \ t>0\\
\tilde{U}(x,0)=\lim_{n\to \infty}U(x,t_{n'};u).
\end{cases}
$$
But $U(x;u)=\lim_{t\to \infty}U(x,t;u)$ and $t_{n'}\to \infty$ as $n\to \infty$, hence $\tilde{U}(x,t;u)=U(x;u)$ for every $x\in \R,\ t\geq 0$. Hence $U(\cdot;u)$ solves \eqref{Eq_MainLem02}.
\end{proof}

\begin{lem}
\label{aux-lm} Assume that $\tau>0$, $0<\mu<\min\{\sqrt{a},\sqrt{\frac{1+\tau a}{(1-\tau)_+}}\}$, $0<\chi<1$ satisfy
\eqref{Eq01_Th1}. Then, for any given $u\in\mathcal{E}_{\tau,\mu}(C_0)$,
\eqref{Eq_MainLem02} has a unique bounded non-negative solution satisfying that
\begin{equation}
\label{aux-eq1}
\liminf_{x\to -\infty}U(x)>0\quad {\rm and}\quad \lim_{x\to\infty}\frac{U(x)}{e^{-\mu x}}=1.
\end{equation}
\end{lem}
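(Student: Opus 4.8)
The plan is to split the statement into an existence part, which is essentially a bookkeeping consequence of what has already been established, and a uniqueness part, which carries all the real content. For existence I would simply take the function $U(\cdot;u)$ produced in Lemma \ref{MainLem02}, which already solves \eqref{Eq_MainLem02}. By Remark \ref{Remark-lower-bound-for -solution} it is trapped, $U_{\tau,\mu,C_0,\delta}^{-}(\cdot)\le U(\cdot;u)\le U_{\tau,\mu,C_0}^{+}(\cdot)$, so it is non-negative and bounded by $C_0$. Since $U_{\tau,\mu,C_0,\delta}^{-}(x)=U_{\tau,\mu,C_0}^{-}(x_\delta)>0$ for $x\le x_\delta$, we get $\liminf_{x\to-\infty}U(x;u)\ge U_{\tau,\mu,C_0}^{-}(x_\delta)>0$. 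For the behavior at $+\infty$, for $x\ge x_\delta$ the trapping reads $e^{-\mu x}-d e^{-\tilde\mu x}\le U(x;u)\le e^{-\mu x}$, whence $1-d e^{-(\tilde\mu-\mu)x}\le U(x;u)/e^{-\mu x}\le 1$; letting $x\to\infty$ and using $\tilde\mu>\mu$ yields $\lim_{x\to\infty}U(x;u)/e^{-\mu x}=1$. This settles existence.

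For uniqueness, let $U_1,U_2$ be two bounded non-negative solutions of \eqref{Eq_MainLem02} with the stated properties. By elliptic regularity each $U_i$ is smooth, and since $\liminf_{x\to-\infty}U_i>0$ the strong maximum principle (together with ODE uniqueness at a would-be zero, where $U_i$ and $U_i'$ would both vanish) forces $U_i>0$ on all of $\R$. Because $U_1/U_2\to1$ at $+\infty$, while near $-\infty$ the denominator is bounded below and the numerator is bounded, the ratio is bounded, so $\gamma:=\sup_{x\in\R}U_1(x)/U_2(x)$ is finite and $\gamma\ge 1$. I would argue by contradiction that $\gamma>1$ is impossible; this gives $U_1\le U_2$, and exchanging the roles of $U_1$ and $U_2$ then yields $U_1=U_2$.

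Assume $\gamma>1$ and set $W:=\gamma U_2-U_1\ge 0$. Subtracting the two copies of \eqref{Eq_MainLem02} and substituting $U_1=\gamma U_2-W$, one finds that $W$ solves an equation of the form
\[
W''+(c_\mu-\chi V'(\cdot;u))W'+\beta(x)W=(b-\chi)\gamma(1-\gamma)U_2^2,
\]
where $\beta$ is bounded and, since $\gamma>1$ and $U_2>0$, the right-hand side is strictly negative. The value $\gamma$ cannot be approached at $+\infty$, where $U_1/U_2\to1<\gamma$. If it were attained at a finite point $x_0$, then $W(x_0)=0$ would be an interior minimum, so $W''(x_0)\ge0$ and $W'(x_0)=0$, forcing the left-hand side to vanish while the right-hand side is strictly negative — a contradiction. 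The only remaining possibility is that $\gamma$ is approached along some $x_n\to-\infty$ with $W(x_n)\to0$. Here I invoke $\liminf_{x\to-\infty}U_2>0$, which gives $U_2^2\ge c_0>0$ near $-\infty$, so the right-hand side stays below a fixed negative constant $-\eta_0$ there. Translating $W_n(\cdot):=W(x_n+\cdot)$ and using the uniform bounds on the coefficients — recall $V,V',V''$ are controlled by \eqref{Estimates on Inverse of V} and Lemmas \ref{Mainlem2}–\ref{Mainlem3}, and $U_2$ together with its derivatives is bounded — interior ODE estimates and Arzel\`a--Ascoli let me pass to a $C^1_{\rm loc}$ limit $W_\infty\ge0$ with $W_\infty(0)=0$, $W_\infty'(0)=0$, solving a limiting equation whose right-hand side is still $\le-\eta_0<0$; evaluating at the interior minimum $0$ gives $0\le W_\infty''(0)\le-\eta_0<0$, the same contradiction.

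The main obstacle is exactly this limiting case at $-\infty$. Because the coefficients of \eqref{Eq_MainLem02} depend on $x$ through $V(\cdot;u)$ and $V'(\cdot;u)$, the equation is genuinely non-autonomous and the usual sliding-in-$x$ is unavailable; one must instead rule out the supremum of the ratio being approached only at $-\infty$, and this is precisely where the hypothesis $\liminf_{x\to-\infty}U>0$ is indispensable, since it keeps the forcing term $(b-\chi)\gamma(1-\gamma)U_2^2$ bounded away from $0$ and thereby drives the compactness contradiction. All the uniform estimates needed for the translation argument are already in place, so once the setup is fixed the compactness step is routine; I would follow the corresponding argument in \cite{SaSh2} (Lemma 3.6) for the remaining computational details.
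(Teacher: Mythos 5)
Your argument is correct, and it is in fact more than the paper itself offers: at this point the paper simply writes that the lemma ``follows from \cite[Lemma 3.6]{SaSh2}'' and reproduces no argument, so the entire content is delegated to the companion paper. Your existence half is exactly the intended bookkeeping (the fixed limit $U(\cdot;u)$ of Lemma \ref{MainLem02} together with the trapping $U^-_{\tau,\mu,C_0,\delta}\le U(\cdot;u)\le U^+_{\tau,\mu,C_0}$ from Remark \ref{Remark-lower-bound-for -solution}, which gives both limits in \eqref{aux-eq1} since $\tilde\mu>\mu$). Your uniqueness half — strict positivity via ODE uniqueness at a touching zero, finiteness of $\gamma=\sup U_1/U_2$ from the prescribed asymptotics, the identity $\mathcal{L}W-(b-\chi)(2\gamma U_2-W)W=(b-\chi)\gamma(1-\gamma)U_2^2$ for $W=\gamma U_2-U_1$, exclusion of an interior touching point by the sign of the right-hand side, and exclusion of touching ``at $-\infty$'' by translation compactness using $\liminf_{x\to-\infty}U_2>0$ — is the standard ratio/sliding argument that the cited lemma also rests on, and every estimate you invoke is available in the paper (\eqref{Estimates on Inverse of V}, Lemmas \ref{Mainlem2}--\ref{Mainlem3}, boundedness of $U_2$ and its derivatives by interior elliptic estimates). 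The one step I would tighten: to evaluate $W_\infty''(0)$ against the limiting equation you need the translates $W(x_n+\cdot)$ to converge in $C^2_{\rm loc}$, not merely $C^1_{\rm loc}$; this does follow, because $V'(\cdot;u)$ and $U_2$ are uniformly Lipschitz (via $V\in C^{2,b}_{\rm unif}$ and the equation for $U_2$), so uniform interior Schauder estimates give equibounded $C^{2,\alpha}$ norms — but it should be said, since with only $C^1_{\rm loc}$ convergence the pointwise contradiction $0\le W_\infty''(0)\le-\eta_0$ is not yet literally available. With that remark added, your proof stands on its own and could replace the citation.
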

The proof of Lemma \ref{aux-lm} follows from  \cite[Lemma 3.6]{SaSh2}.

We now prove Theorem \ref{existence-tv-thm}.

\begin{proof}[Proof of Theorem  \ref{existence-tv-thm}]
Following the proof of Theorem 3.1 in \cite{SaSh2},
 let us consider the normed linear space  $\mathcal{E}=C^{b}_{\rm unif}(\R)$ endowed with the norm
$$\|u\|_{\ast}=\sum_{n=1}^{\infty}\frac{1}{2^n}\|u\|_{L^{\infty}([-n,\ n])}. $$
For every $u\in\mathcal{E}_{\tau, \mu}(C_0)$ we have that
$$\|u\|_{\ast}\leq C_0=\frac{a}{b-\chi(1+\tau c_{\mu})}. $$
Hence $\mathcal{E}_{\tau,\mu}(C_0)$ is a bounded convex subset of $\mathcal{E}$. Furthermore, since the convergence in $\mathcal{E}$ implies the
  pointwise convergence, then $\mathcal{E}_{\tau,\mu}(C_0)$ is a closed, bounded, and convex subset of $\mathcal{E}$. Furthermore, a sequence of functions in $\mathcal{E}_{\tau,\mu}(C_0)$ converges with respect to norm $\|\cdot\|_{\ast}$ if and only if it  converges locally uniformly convergence on $\R$.

We prove that the mapping $\mathcal{E}_{\tau,\mu}(C_0)\ni u\mapsto U(\cdot;u)\in\mathcal{E}_{\tau,\mu}(C_0)$ has a fixed point. We divide the proof in two steps.

\smallskip

\noindent {\bf Step 1.} In this step, we prove that the mapping $\mathcal{E}_{\tau,\mu}(C_0)\ni u\mapsto U(\cdot;u)\in \mathcal{E}_{\tau,\mu}(C_0)$ is compact.

 Let $\{u_{n}\}_{n\geq 1}$ be a sequence of elements of $\mathcal{E}_{\tau,\mu}(C_0)$. Since $U(\cdot;u_{n})\in \mathcal{E}_{\tau,\mu}(C_0)$ for every $n\geq 1$ then $\{U(\cdot;u_{n})\}_{n\geq 1}$ is clearly uniformly bounded by $C_0$. Using inequality \eqref{Eq_Convergence01}, we have that
\begin{equation*}
\sup_{t\geq 1}\|U(\cdot,t;u_{n})\|_{X^{\beta}}\leq M_{1}
\end{equation*}
for all $n\geq 1$ where $M_{1}$ is given by \eqref{Eq_Conv02}. Therefore there is $0<\nu\ll 1$ such that
\begin{equation}\label{Proof-MainTh3- Eq1}
\sup_{t\geq 1}\|U(\cdot,t;u_{n})\|_{C^{b,\nu}_{\rm unif}(\R)}\leq \tilde{M_{1}}
\end{equation} for every $n\geq 1$ where $\tilde{M_{1}}$ is a constant depending only on $M_{1}$. Since for every $n\geq 1$ and every $x\in\R$, we have that $U(x,t;u_{n})\to U(x;u_{n})$ as $t\to \infty,$ then it follows from \eqref{Proof-MainTh3- Eq1} that
\begin{equation}\label{Prof-MainTh3- Eq2}
\|U(\cdot;u_{n})\|_{C^{b,\nu}_{\rm unif}}\leq \tilde{M_{1}}
\end{equation} for every $n\geq 1$. Which implies that the sequence $\{U(\cdot;u_{n})\}_{n\geq 1}$ is equicontinuous. The Arzela-Ascoli's Theorem implies that there is a subsequence $\{U(\cdot;u_{n'})\}_{n\geq 1}$ of the sequence $\{U(\cdot;u_{n})\}_{n\geq 1}$ and a function $U\in C(\R)$ such that $\{U(\cdot;u_{n'})\}_{n\geq 1}$ converges to $U$ locally uniformly on $\R$. Furthermore, the function $U$ satisfies inequality \eqref{Prof-MainTh3- Eq2}. Combining this with the fact  $U_{\tau,\mu,C_0}^{-}(x)\leq U(x;u_{n'})\leq U_{\tau,\mu,C_0}^{+}(x)$ for every $x\in\R$ and $n\geq 1$, by letting $n$ goes to infinity, we obtain that  $U\in \mathcal{E}_{\tau,\mu}(C_0)$.

\smallskip

\noindent{\bf Step 2.} In this step, we prove that the mapping $\mathcal{E}_{\tau,\mu}(C_0)\ni u\mapsto U(\cdot;u)\in \mathcal{E}_{\tau,\mu}(C_0)$ is continuous.
 This follows from the arguments used in the proof of Step 2, Theorem 3.1, \cite{SaSh2}

Now by Schauder's Fixed Point Theorem, there is $U\in\mathcal{E}_{\tau,\mu}(C_0)$ such that $U(\cdot;U)=U(\cdot)$. Then
$(U(x),V(x;U))$ is a stationary solution of \eqref{stationary-eq} with $c=c_\mu$. It is clear that
$$
\lim_{x\to\infty}\frac{U(x)}{e^{-\mu x}}=1.
$$

We claim that
$$
\lim_{x\to -\infty}U(x)=\frac{a}{b}.
$$
For otherwise, we may assume that there is $x_n\to -\infty$ such that $U(x_n)\to \lambda\not =\frac{a}{b}$ as $n\to\infty$. Define $U_{n}(x)=U(x+x_{n})$ for every $x\in\R$ and $n\geq 1$. By observing that $U_{n}=U(\cdot;U_{n})$ for every $n$, hence it follows from the step 1, that there is a subsequence $\{U_{n'}\}_{n\geq 1}$ of $\{U_{n}\}_{n\geq 1}$ and a function $U^*\in\mathcal{E}_{\mu}$ such that $\|U_{n'}-U^*\|_{\ast}\to 0$ as $n\to \infty$. Next, it follows from step 2 that $(U^*,V(\cdot;U^*))$ is also a stationary solution of \eqref{stationary-eq}.

\smallskip

\noindent {\bf Claim.} $\inf_{x\in\R}U^{*}(x)>0$. Indeed, let $0< \delta\ll 1$ be fixed. For  every $x\in\R$, there $N_{x}\gg 1$ such that $x+x_{n'}< x_{\delta}$ for all $n\geq N_{x}$. Hence, It follows from Remark \ref{Remark-lower-bound-for -solution} that  $$0< U_{\tau,\mu,C_0,\delta}^{-}(x_{\delta})\leq U(x+x_{n'}) \ \forall\ n\geq N_{x}.$$
Letting $n$ goes to infinity in the last inequality, we obtain that $U_{\tau,\mu,C_0,\delta}^{-}(x_{\delta})\leq U^{*}(x)$ for every $x\in\R$. The claim thus follows.

Since $0<\mu<\min\{\sqrt{a},\sqrt{\frac{1+\tau a}{(1-\tau)_{+}}}\}$  and $0<\chi<1$ satisfy \eqref{Eq01_Th1}, then we have that $b-\chi-\chi(1+\tau c_{\mu})>0$. Thus, it follows from Theorem B that $U^*(x)=V(x;U^*)=\frac{a}{b} $ for every $x\in \R$.
This implies that $\lambda=U^*(0)=\frac{a}{b}$, which is a contradiction. Hence $\lim_{x\to -\infty} U(x)=\frac{a}{b}$.
\end{proof}

 Now, we are ready to prove Theorem C.

\begin{proof}[Proof of Theorem C]
First, for each $\tau>0$, let us define
$\mu_{\tau}:=\min\{\sqrt{a}\ ,\ \sqrt{\frac{1+\tau a}{(1-\tau)_+}}\} $ and $
\chi^*_\tau : =\frac{b}{1+m_{\tau}}
$  where $m_{\tau}$ is given by
\begin{align*}
m_{\tau}:=\inf\Big\{\max\Big\{ 1+\tau c_{\mu}\ ,\ \Big(\frac{\mu +\tau c_\mu}{\sqrt{1+\tau\mu c_\mu-\mu^2}}+\frac{\mu(\mu+\tau c_\mu)}{1+\tau\mu c_\mu-\mu^2}\Big) \Big\} \,\,\,  | \, \mu\in\Big(0\ ,\ \mu_\tau\Big) \Big\},
\end{align*}
with $c_\mu=\mu+\frac{a}{\mu}$.
Observe that $ m_{\tau}=\frac{b-\chi^*_\tau}{\chi^*_\tau}$ for every $\tau>0$. Let $\tau >0$ be fixed.  Since  the function $(0\ ,\  \infty)\ni\chi\mapsto \frac{b-\chi}{\chi}$ is continuous and strictly decreasing, then for every $0<\chi<\chi^*_{\tau}$ we have that $m_\tau <\frac{b-\chi}{\chi}$.  Hence, it follows from the continuity of the function
$$
\big(0\ ,\ \mu_{\tau}\big)\ni \mu\mapsto \max\Big\{ 1+\tau c_{\mu}\ ,\ \Big(\frac{\mu +\tau c_\mu}{\sqrt{1+\tau\mu c_\mu-\mu^2}}+\frac{\mu(\mu+\tau c_\mu)}{1+\tau\mu c_\mu-\mu^2}\Big) \Big\}
$$
 that for every $0<\chi<\chi^*_{\tau}$, there is a nonempty open interval $I_{\chi}\subset (0 \ ,\ \mu_\tau)$ such that
 \begin{equation}\label{Zaux-01}
\max\Big\{ 1+\tau c_{\mu}\ ,\ \Big(\frac{\mu +\tau c_\mu}{\sqrt{1+\tau\mu c_\mu-\mu^2}}+\frac{\mu(\mu+\tau c_\mu)}{1+\tau\mu c_\mu-\mu^2}\Big) \Big\}<\frac{b-\chi}{\chi}, \quad \forall\ \mu\in I_{\chi}.
 \end{equation}

Next, for every $ 0< \chi<\chi^*_\tau$ define $ \tilde \mu^{**}(\chi;\tau)$  and $E_{\chi;\tau}$ by
\begin{align}\label{ZZZ0}
 \tilde \mu^{**}(\chi;\tau)=\inf\Big\{& \mu\in(0\ ,\ \mu_\tau)\ |\ \max\big\{ 1+\tau c_{\mu}\ ,\ \frac{\mu +\tau c_\mu}{\sqrt{1+\tau\mu c_\mu-\mu^2}}+\frac{\mu(\mu+\tau c_\mu)}{1+\tau\mu c_\mu-\mu^2} \big\}<\frac{b-\chi}{\chi}   \Big\}.
\end{align}
and
$$
E_{\chi;\tau}:=\big\{\mu\in (0\ , \mu_{\tau})\ | \ \Big(\frac{\mu +\tau c_\mu}{\sqrt{1+\tau\mu c_\mu-\mu^2}}+\frac{\mu(\mu+\tau c_\mu)}{1+\tau\mu c_\mu-\mu^2}\Big)=\frac{b-\chi}{\chi}\big\}.
$$
 We note that
\begin{equation}
\label{ZZZ00-1}
1+\tau c_\mu <\frac{b-\chi}{\chi}\quad \forall \mu \in (\tilde \mu^{**}(\chi;\tau),\mu_\tau).
\end{equation}
We also note that
\begin{equation}\label{ZZZ00}
\lim_{\mu\to 0^+} \Big(\frac{\mu +\tau c_\mu}{\sqrt{1+\tau\mu c_\mu-\mu^2}}+\frac{\mu(\mu+\tau c_\mu)}{1+\tau\mu c_\mu-\mu^2}\Big)=\infty,
\end{equation}
and that the set $E_{\chi,\tau}$ has at most finitely many elements. It follows from \eqref{Zaux-01} and \eqref{ZZZ00} that  for every  $\chi<\chi_{\tau}^{*}$, $E_{\chi,\tau}$ is nonempty. Hence, for every $\chi<\chi_{\tau}^*$, there is $n=n(\chi,\tau)\in\N$ and $0<\mu_{\chi;\tau}^{1}<\cdots< \mu_{\chi;\tau}^{n}<\mu_{\tau}$ with
$$ \frac{ {\mu_{\chi,\tau}^{i}} +\tau c_{\mu_{\chi,\tau}^{i}}}{\sqrt{1+\tau{\mu_{\chi,\tau}^{i}}c_{\mu_{\chi,\tau}^{i}}-({\mu_{\chi,\tau}^{i}})^2}}+\frac{{\mu_{\chi,\tau}^{i}}({\mu_{\chi,\tau}^{i}}+\tau c_\mu)}{1+\tau\mu c_{\mu_{\chi,\tau}^{i}}-({\mu_{\chi,\tau}^{i}})^2}=\frac{b-\chi}{\chi}, \quad \forall\ i=1,\cdots,n . $$  Let us set $\mu_{\chi;\tau}^{0}=0$ and $\mu_{\chi,\tau}^{1+n(\chi,\tau)}=\mu_\tau$.  It follows from \eqref{ZZZ00} that for every $\chi<\chi_{\tau}^{*}$, we have  $\mu_{\chi,\tau}^{1}\leq  \tilde \mu^{**}(\chi;\tau)$. Note that the function $(0\ ,\ \mu_\tau)\ni\mu \mapsto \frac{\mu +\tau c_\mu}{\sqrt{1+\tau\mu c_\mu-\mu^2}}+\frac{\mu(\mu+\tau c_\mu)}{1+\tau\mu c_\mu-\mu^2} $ has constant sign on each of the open intervals  $(\mu_{\chi,\tau}^{i}\ ,\ \mu_{\chi,\tau}^{i+1})$, $i=1,\cdots,n(\chi,\tau)$.
Since $n(\chi;\tau)$ is finite, and the intervals $(0\ ,\ \mu_{\chi,\tau}^{1}), [{\mu_{\chi,\tau}^{1}}\ ,\ \mu_{\chi,\tau}^{2})$, $[\mu_{\chi,\tau}^{2}\ ,\ \mu_{\chi,\tau}^{3})$,$\cdots$, $[\mu_{\chi,\tau}^{n(\chi;\tau)}\ ,\ {\mu_{\chi,\tau}^{1+n(\chi;\tau)}})$ partition $(0\ ,\ \mu_\tau)$ into disjoint sets, then there is  at least one $1\leq i\leq n(\chi;\tau)$, such that $\tilde \mu^{**}(\chi,\tau)\le \mu_{\chi,\tau}^{i +1}$, and
\begin{equation}\label{ZZZ0-0}
\frac{\mu +\tau c_\mu}{\sqrt{1+\tau\mu c_\mu-\mu^2}}+\frac{\mu(\mu+\tau c_\mu)}{1+\tau\mu c_\mu-\mu^2}{ <} \frac{b-\chi}{\chi},\quad \forall \mu \in { (}\mu_{\chi,\tau}^i \ ,\ \mu_{\chi,\tau}^{i+1}).
\end{equation}
Let $i^*$ be the largest $i$ satisfying   \eqref{ZZZ0-0}.
For every  $\chi<\chi_{\tau}^{*}$, define
\begin{equation}
\label{ZZZ0-1}
 \mu^{*}(\chi;\tau)=\mu_{\chi,\tau}^{i^*+1},
\end{equation}
and
\begin{align}\label{ZZZ1}
 \mu^{**}(\chi;\tau)=\inf\Big\{ & \bar{\mu}\in(\tilde \mu^{**}(\chi;\tau),\mu^{*}(\chi;\tau))\ | \nonumber\\
 &\Big(\frac{\mu +\tau c_\mu}{\sqrt{1+\tau\mu c_\mu-\mu^2}}+\frac{\mu(\mu+\tau c_\mu)}{1+\tau\mu c_\mu-\mu^2} \Big)\leq \frac{b-\chi}{\chi},\  \forall \ \bar \mu\le \mu \le  \mu^{*}(\chi;\tau)   \Big\}.
\end{align}
Thus, it follows from \eqref{ZZZ1} that  $0\leq \mu^{**}(\chi;\tau)<\mu^{*}(\chi;\tau)\leq \mu_\tau$ for every $0<\chi<\chi^*_\tau$.
Finally, for every $0<\chi<\chi^*_{\tau}$, we set
$$
c^{*}(\chi;\tau)=c_{\mu^{*}(\chi;\tau)},
$$
and
$$
c^{**}(\chi;\tau):= c_{\mu^{**}(\chi;\tau)}.
$$

\medskip

We claim now that for every $ 0<\chi<\chi^{*}_{\tau}$, and $c\in (c^*(\chi;\tau)\ ,\ c^{**}(\chi;\tau))$, \eqref{main-eq1} has a traveling wave solution connecting $(\frac{a}{b},\frac{a}{b})$ and $(0,0)$ with speed $c$. Indeed, let   $c\in {\rd (}c^*(\chi;\tau)\ ,\ c^{**}(\chi;\tau))$ be given. Since the function $(0\ ,\ \mu_\tau]\ni\mu\mapsto c_{\mu} $ is strictly decreasing, there is a unique $\mu^{**}(\chi;\tau)<\mu(c) <\mu^*(\chi;\tau)$ such that $c=c_{\mu(c)}$. Observe that $\mu(c)$ and $\chi$ satisfy \eqref{Eq01_Th1}. Thus it follows from Theorem \ref{existence-tv-thm} that \eqref{main-eq1} has a traveling wave solution
$(u(x,t),v(x,t))=(U(x-c_\mu t),V(x-c_\mu t))$ satisfying
$$
\lim_{x\to-\infty}U(x)=\frac{a}{b} \quad \text{and}\quad
\lim_{x\to\infty}\frac{U(x)}{e^{-\mu x}}=1.
$$ It is clear from \eqref{ZZZ0}, \eqref{ZZZ0-1}, and \eqref{ZZZ1}  that for every $\tau>0$, we have
$$
\lim_{\chi\to 0+}\mu^*(\chi;\tau)=\mu_{\tau}\quad \text{and} \quad \lim_{\chi\to 0+}\mu^{**}(\chi;\tau)=0.
$$
It  then follows that
$$
\lim_{\chi\to 0+}c^*(\chi;\tau)=\begin{cases}
2\sqrt{a}\qquad \qquad \qquad \text{if}\ 0<a\leq \frac{1+\tau a}{(1-\tau)_+},\\
\frac{1+\tau a}{(1-\tau)_+}+\frac{a(1-\tau)_{+}}{1+\tau a}\quad \text{if}\ a\geq \frac{1+\tau a}{(1-\tau)_+}
\end{cases} \quad \text{and} \quad \lim_{\chi\to 0+}c^{**}(\chi;\tau)=\infty.
$$

 Finally, we consider the limits of $\chi_\tau^*$,   $c^*(\chi;\tau)$,  and $c^{**}(\chi;\tau)$  as $\tau\to 0+$.
From the definition of $m_\tau$, we have that
$$ 1\leq \liminf_{\tau\to 0+}m_{\tau}.$$
On the other hand, we have that
$$
\limsup_{\tau\to 0+}m_\tau\leq \inf\{ \max\{ 1, \  \Big(\frac{\mu}{\sqrt{1-\mu^2}}+\frac{\mu^2}{1-\mu^2}\Big)\}\ | \ \mu\in(0\ ,\  \min\{\sqrt{a}\ ,\ 1\} \}=1.
$$
Hence we have that $\lim_{\tau\to 0+}m_{\tau}=1$, which implies that $\lim_{\tau\to 0+}\chi_{\tau}^*=\frac{b}{2}$.

For every $0<\chi<\frac{b}{2}$,  let   $\mu^{*}(\chi)$  be  given by
$$\mu^*(\chi):=\sup\{\mu\in(0, \min\{\sqrt{a}, 1\}) \ \ |\ \ \Big(\frac{\mu}{\sqrt{1-\mu^2}}+\frac{\mu^2}{1-\mu^2}\Big)\leq \frac{b-\chi}{\chi}\}. $$
Note that $\mu^{*}(\chi)<1$ for every $0<\chi<\frac{b}{2}$. Let $0<\chi<\frac{b}{2}$ be given.
Using the fact that the function $(0, \min\{\sqrt{a},1\})\ni \mu \mapsto \frac{\mu}{\sqrt{1-\mu^2}}+\frac{\mu^2}{1-\mu^2} $ is increasing, we obtain that
\begin{align*}
\lim_{\tau\to 0^+}\Big(\frac{\mu +\tau c_\mu}{\sqrt{1+\tau\mu c_\mu-\mu^2}}+\frac{\mu(\mu+\tau c_\mu)}{1+\tau\mu c_\mu-\mu^2}\Big)& =\Big(\frac{\mu}{\sqrt{1-\mu^2}}+\frac{\mu^2}{1-\mu^2}\Big)\\
&< \Big(\frac{\mu^*(\chi)}{\sqrt{1-(\mu^{*}(\chi))^2}}+\frac{(\mu^{*}(\chi))^2}{1-(\mu^{*}(\chi))2}\Big) \nonumber\\
&\leq \frac{b-\chi}{\chi}\quad {\rm for}\quad \mu\in (0,\mu^*(\chi)),
\end{align*}
 where the limit is uniform in any closed interval of $(0,\min\{\sqrt a,1\})$. It then follows that
$$
\lim_{\tau\to  0+}\mu^{**}(\chi;\tau)=0,\quad \lim_{\tau \to 0+}\mu^*(\chi;\tau)=\mu^*(\chi).
$$
This implies that
$$
\lim_{\tau\to 0+} c^*(\chi;\tau)=c^*(\chi),\quad \lim_{\tau\to 0+}c^{**}(\chi;\tau)=\infty.
$$
\end{proof}

\begin{rk}
\label{rk-thmc}
If $0<a<\frac{1+\tau a}{(1-\tau)_{+}}$, then $\mu_{\tau}=\sqrt a$. If $0<a<\frac{1+\tau a}{(1-\tau)_{+}}$ and
$$
0<\chi< \frac{b}{1+\max\{1+2\tau\sqrt{a}\ ;\ \frac{\sqrt{a}(1+2\tau)}{\sqrt{1+2\tau a-a}} + \frac{a(1+2)}{1+2\tau a-a}\} },
$$
then
 \begin{equation}\label{1}
 \max\{1+\tau c_{\mu_{\tau}}, \frac{\mu_\tau+\tau c_{\mu_{\tau}}}{\sqrt{1+\tau \mu_{\tau}c_{\mu_{\tau}}-\mu^2_\tau}} + \frac{\mu_{\tau}(\mu_\tau+\tau c_{\mu_{\tau}})}{1+\tau \mu_{\tau}c_{\mu_{\tau}}-\mu^2_\tau}\}<\frac{b-\chi}{\chi},
\end{equation}
and
hence $ \mu^{*}(\chi;\tau)=\mu_\tau$ and  $c^*(\chi;\tau)=2\sqrt a$.
\end{rk}

\subsection{Proof of Theorem D}

In this subsection, we prove Theorem D.
To do so, we first prove the following two  lemmas.

\begin{lem}
\label{nonexistence-lm1}
\begin{itemize}
\item[(1)]
Let $0\le c<2\sqrt a$ be  fixed  and $\lambda_0>0$ be such that $c^2-4 a+4\lambda_0<0$. Let $\lambda_D(L)$ be the principal eigenvalue of
\begin{equation}
\label{ev-eq0}
\begin{cases}
\phi_{xx}+c \phi_x +a\phi =\lambda \phi,\quad 0<x<L\cr
\phi(0)=\phi(L)=0.
\end{cases}
\end{equation}
Then there is $L>0$ such that  $\lambda_D(L)=\lambda_0$.

\item[(2)] Let $c$ and $L$ be as in (1). Let $\lambda_D(L;b_1,b_2)$ be the principal eigenvalue of
\begin{equation}
\label{ev-eq1}
\begin{cases}
\phi_{xx}+(c + b_1(x))\phi_x +(a+ b_2(x))\phi =\lambda \phi,\quad 0<x<L\cr
\phi(0)=\phi(L)=0,
\end{cases}
\end{equation}
where $b_1(x)$ and $b_2(x)$ are continuous functions.
 If there is a $C^2$ function $\phi(x)$ with $\phi(x)>0$ for
$0<x<L$  such that
\begin{equation}
\label{ev-eq2}
\begin{cases}
\phi_{xx}+(c + b_1(x))\phi_x +(a+ b_2(x))\phi  \le 0,\quad 0<x<L\cr
\phi(0)\ge 0 ,\quad \phi(L)\ge 0
\end{cases}
\end{equation}
 Then $\lambda_D(L,b_1,b_2)\le 0$.
\end{itemize}
\end{lem}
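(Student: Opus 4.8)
The plan is to treat the two parts separately, handling (1) by explicit diagonalization and (2) by reducing the non-self-adjoint operator to Sturm--Liouville form via an integrating factor.

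For part (1), I would remove the drift by the substitution $\phi(x)=e^{-cx/2}\psi(x)$, which transforms the eigenvalue equation in \eqref{ev-eq0} into $\psi_{xx}+(a-\frac{c^2}{4})\psi=\lambda\psi$ with the same Dirichlet conditions $\psi(0)=\psi(L)=0$. The principal Dirichlet eigenfunction of $\psi_{xx}$ on $(0,L)$ is $\sin(\pi x/L)$ with eigenvalue $-\pi^2/L^2$, so the principal eigenvalue of \eqref{ev-eq0} is $\lambda_D(L)=a-\frac{c^2}{4}-\frac{\pi^2}{L^2}$. This is continuous and strictly increasing in $L$, tends to $-\infty$ as $L\to 0^+$ and to $a-\frac{c^2}{4}$ as $L\to\infty$. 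The hypothesis $c^2-4a+4\lambda_0<0$ says exactly $\lambda_0<a-\frac{c^2}{4}$, so $\lambda_0$ lies in the range of $L\mapsto\lambda_D(L)$, and solving $a-\frac{c^2}{4}-\frac{\pi^2}{L^2}=\lambda_0$ produces the desired $L=\pi/\sqrt{a-c^2/4-\lambda_0}$.

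For part (2), the key device is the integrating factor $\rho(x)=\exp(\int_0^x (c+b_1(s))\,ds)$, which is $C^1$ and strictly positive since $b_1$ is continuous. Multiplying the operator by $\rho$ puts it in self-adjoint form, $\rho\,\mathcal{L}u=(\rho u_x)_x+\rho(a+b_2)u$. Let $\psi>0$ be the principal eigenfunction for $\lambda_D:=\lambda_D(L;b_1,b_2)$, so that $(\rho\psi_x)_x+\rho(a+b_2)\psi=\lambda_D\rho\psi$ with $\psi(0)=\psi(L)=0$; its existence and positivity follow from Krein--Rutman. Multiplying the eigen-equation by $\phi$, multiplying the supersolution inequality in \eqref{ev-eq2} by $\rho\psi\ge 0$, subtracting, and integrating over $(0,L)$, the Lagrange identity collapses the left-hand side to the boundary term $[\rho(\phi\psi_x-\psi\phi_x)]_0^L$. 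Since $\psi$ vanishes at both endpoints this reduces to $\rho(L)\phi(L)\psi_x(L)-\rho(0)\phi(0)\psi_x(0)$, yielding $\lambda_D\int_0^L\rho\psi\phi\,dx\le\rho(L)\phi(L)\psi_x(L)-\rho(0)\phi(0)\psi_x(0)$.

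The remaining step is to sign the boundary term, and this is where I expect the main obstacle to lie: one must invoke positivity of the principal eigenfunction together with the Hopf boundary-point lemma to conclude $\psi_x(0)>0$ and $\psi_x(L)<0$ (the zeroth-order coefficient is harmless because it is multiplied by $\psi=0$ at the endpoints, so Hopf applies despite $a+b_2$ having no definite sign and $b_1,b_2$ being merely continuous). Combined with $\rho>0$ and $\phi(0),\phi(L)\ge 0$, both boundary contributions are $\le 0$, so the right-hand side is $\le 0$; since $\rho,\psi,\phi$ are all strictly positive on $(0,L)$ the weight $\int_0^L\rho\psi\phi\,dx$ is strictly positive, and dividing gives $\lambda_D\le 0$. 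As a fallback I would keep in reserve a sliding argument that minimizes $\tau$ in $\tau\phi\ge\psi$ and derives a contradiction from the interior maximum principle, but the Sturm--Liouville/Green's-identity route treats all boundary configurations uniformly and is cleaner.
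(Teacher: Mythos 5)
Your proof is correct. Part (1) is essentially the paper's argument: the authors simply write down the explicit eigenfunction $e^{-cx/2}\sin\bigl(\tfrac{\sqrt{4a-4\lambda_0-c^2}}{2}x\bigr)$ for the $L$ solving $\tfrac{\sqrt{4a-4\lambda_0-c^2}}{2}L=\pi$, which is exactly the $L=\pi/\sqrt{a-c^2/4-\lambda_0}$ you obtain after removing the drift. Part (2), however, is proved by a genuinely different route in the paper: the authors characterize the principal eigenvalue dynamically as $\lambda_D(L;b_1,b_2)=\lim_{t\to\infty}t^{-1}\ln\|u(\cdot,t;u_0,b_1,b_2)\|_{L^2}$ for the associated Dirichlet parabolic problem with nonnegative nonzero initial data, and then observe that the supersolution $\phi$ dominates $u(\cdot,t;\phi,b_1,b_2)$ for all $t$ by the parabolic comparison principle, forcing the growth rate to be nonpositive. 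Your route instead exploits the one-dimensional structure: the integrating factor $\rho=\exp(\int_0^x(c+b_1))$ puts the operator in Sturm--Liouville form, and Green's identity against the principal eigenfunction, with the boundary terms signed by $\psi_x(0)\ge 0$, $\psi_x(L)\le 0$ (for which Hopf is not even needed, since $\psi>0$ inside and vanishes at the endpoints), gives $\lambda_D\int_0^L\rho\psi\phi\le 0$ directly. Your argument is more elementary and self-contained, avoiding the semigroup characterization of the principal eigenvalue, but it relies on the drift being reducible to divergence form, which is automatic only in one dimension; the paper's parabolic comparison argument is insensitive to this and transfers verbatim to higher-dimensional or non-self-adjointizable settings. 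Both are complete proofs of the stated lemma.
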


\begin{proof}
(1)  Let $L>0$ be such that
$$
\frac{\sqrt{4a-4\lambda_0-c^2}}{2}L=\pi.
$$
Then $\lambda=\lambda_0$ is the principal eigenvalue of \eqref{ev-eq0} and $\phi(x)=e^{-\frac{c}{2}x}\sin\Big(\frac{\sqrt {4a-4\lambda_0 -c^2}}{2}x\Big)$
is a corresponding positive eigenfunction.  Hence
$\lambda_D(L)= \lambda_0$ and (1) follows.

(2)  Consider
\begin{equation}
\label{aux-ev-eq}
\begin{cases}
u_t=u_{xx}+(c+b_1(x))u_x+(a+b_2(x))u,\quad 0<x<L\cr
u(x,0)=u(x,L)=0.
\end{cases}
\end{equation}
Let $u(x,t;u_0,b_1,b_2)$ be the solution of \eqref{aux-ev-eq} with $u(x,0;u_0,b_1,b_2)=u_0(x)$ for $u_0\in L^2(0,L)$.
Then we have
$$
\lambda_D(L;b_1,b_2)=\lim_{t\to\infty}\frac{\ln\|u(\cdot,t;u_0,b_1,b_2)\|_{L^2}}{t}
$$
for any $u_0\in L^2(0,L)$ with $u_0\ge 0$ and $u_0\not =0$.
By comparison principle for parabolic equations, $u(x,t;\phi,b_1,b_2)\le \phi(x)$ for all $t\ge 0$ and $0<x<L$. It then follows that
$$
\lambda_D(L;b_1,b_2)\le 0.
$$
\end{proof}

\begin{lem}
\label{nonexistence-lm2}
\begin{itemize}
\item[(1)] Let $c<0$ be fixed and let $\lambda_0>0$ be such that $0<\lambda_0<a$.
Let $\lambda_{N,D}(L)$ be the principal eigenvalue of
\begin{equation}
\label{ev-eq3}
\begin{cases}
\phi_{xx}+c \phi_x +a\phi =\lambda \phi,\quad 0<x<L\cr
\phi_x(0)=\phi(L)=0.
\end{cases}
\end{equation}
Then there is $L>0$ such that  $\lambda_{N,D}(L)=\lambda_0$.

\item[(2)] Let $c$ and $L$ be as in (1). Let $\lambda_{N,D}(L;b_1,b_2)$ be the principal eigenvalue of
\begin{equation}
\label{ev-eq4}
\begin{cases}
\phi_{xx}+(c + b_1(x))\phi_x +(a+ b_2(x))\phi =\lambda \phi,\quad 0<x<L\cr
\phi_x(0)=\phi(L)=0,
\end{cases}
\end{equation}
where $b_1(x)$ and $b_2(x)$ are continuous functions.
 If there is a $C^2$ function $\phi(x)$ with $\phi(x)>0$ for
$0<x<L$  such that
\begin{equation}
\label{ev-eq5}
\begin{cases}
\phi_{xx}+(c + b_1(x))\phi_x +(a+ b_2(x))\phi  \le 0,\quad 0<x<L\cr
\phi_x(0)\le  0 ,\quad \phi(L)\ge 0
\end{cases}
\end{equation}
 Then $\lambda_{N,D}(L,b_1,b_2)\le 0$.
\end{itemize}
\end{lem}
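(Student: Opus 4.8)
The plan is to mirror the proof of Lemma \ref{nonexistence-lm1}, with the two modifications forced by the Neumann condition $\phi_x(0)=0$ at the left endpoint: the explicit principal eigenfunction in part (1) now takes three different forms according to the sign of $a-\lambda_0-\frac{c^2}{4}$, and the comparison argument in part (2) must control the flux at the Neumann boundary rather than merely compare boundary values.

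For part (1), I would first remove the drift by writing $\phi(x)=e^{-\frac{c}{2}x}\psi(x)$, which turns \eqref{ev-eq3} with $\lambda=\lambda_0$ into the constant-coefficient problem $\psi''+(a-\lambda_0-\frac{c^2}{4})\psi=0$ together with the transformed conditions $\psi'(0)=\frac{c}{2}\psi(0)$ and $\psi(L)=0$. I would fix the normalization $\psi(0)=1$, $\psi'(0)=\frac{c}{2}$ (so that $\phi_x(0)=0$) and take $L$ to be the first positive zero of the resulting $\psi$; on $(0,L)$ one then has $\psi>0$, hence $\phi=e^{-\frac{c}{2}x}\psi>0$, so $\lambda_0$ is the eigenvalue of a positive eigenfunction and is therefore the principal eigenvalue $\lambda_{N,D}(L)$. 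The only point needing care is that this first zero is finite for every $\lambda_0\in(0,a)$, which is checked case by case: if $\lambda_0<a-\frac{c^2}{4}$ the solution is oscillatory and a first zero clearly exists; if $\lambda_0=a-\frac{c^2}{4}$ then $\psi(x)=1+\frac{c}{2}x$ vanishes at $x=-\frac{2}{c}>0$; and if $a-\frac{c^2}{4}<\lambda_0<a$ then $\psi(x)=\cosh(\kappa x)+\frac{c}{2\kappa}\sinh(\kappa x)$ with $\kappa=\sqrt{\lambda_0-a+\frac{c^2}{4}}\in(0,-\frac{c}{2})$, and since $1+\frac{c}{2\kappa}<0$ the function tends to $-\infty$ and must cross zero. (Equivalently, one may show $L\mapsto\lambda_{N,D}(L)$ is continuous and strictly increasing with $\lambda_{N,D}(L)\to-\infty$ as $L\to0^+$ and $\lambda_{N,D}(L)\to a^-$ as $L\to\infty$, and invoke the intermediate value theorem.)

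For part (2), I would argue exactly as in Lemma \ref{nonexistence-lm1}(2). Consider
$$
u_t=u_{xx}+(c+b_1(x))u_x+(a+b_2(x))u,\quad 0<x<L,
$$
with the mixed conditions $u_x(0,t)=0$, $u(L,t)=0$, and recall
$$
\lambda_{N,D}(L;b_1,b_2)=\lim_{t\to\infty}\frac{\ln\|u(\cdot,t;u_0,b_1,b_2)\|_{L^2}}{t}
$$
for any $u_0\ge0$, $u_0\not\equiv0$. Taking $u_0=\phi$, the hypothesis \eqref{ev-eq5} says precisely that the stationary $\phi$ is a supersolution in the interior, that $\phi(L)\ge0=u(L,t)$ at the Dirichlet end, and that $\phi_x(0)\le0=u_x(0,t)$ at the Neumann end. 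Setting $w=\phi-u$, then $w$ is a supersolution of the associated linear parabolic equation with $w(\cdot,0)=0$, $w(L,t)\ge0$ and $w_x(0,t)\le0$; the parabolic minimum principle together with Hopf's lemma at $x=0$ (a negative interior minimum propagated to the boundary would force $w_x(0,t)>0$, contradicting $w_x(0,t)\le0$) yields $u(\cdot,t)\le\phi$ for all $t\ge0$. Since also $u\ge0$, the solution remains in the bounded set $0\le u\le\phi$, so $\|u(\cdot,t)\|_{L^2}$ stays bounded and the growth rate $\lambda_{N,D}(L;b_1,b_2)$ is $\le0$.

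The main obstacle is the correct treatment of the Neumann endpoint $x=0$: unlike the purely Dirichlet situation of Lemma \ref{nonexistence-lm1}, one cannot simply compare boundary values there, and the inequality $\phi_x(0)\le0$ must be used through Hopf's lemma to exclude a boundary minimum of $w$. The remaining ingredients — the explicit eigenfunction in part (1) and the growth-rate characterization of the principal eigenvalue in part (2) — are routine, the only genuine bookkeeping being the three-case discussion that guarantees a finite first zero over the full range $0<\lambda_0<a$.
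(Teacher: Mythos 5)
Your proposal is correct and follows essentially the same route as the paper: part (1) is proved by exhibiting an explicit positive solution of the constant-coefficient problem satisfying $\phi_x(0)=0$ and taking $L$ to be its first zero, and part (2) uses the parabolic growth-rate characterization of $\lambda_{N,D}(L;b_1,b_2)$ together with the comparison $u(\cdot,t;\phi)\le\phi$. The only differences are ones of completeness rather than of method: the paper writes its eigenfunction only under the extra restriction $4a-4\lambda_0<c^2$ (which suffices for Theorem D, where a single admissible pair $(\lambda_0,L)$ is all that is needed), whereas your three-case discussion covers the full range $0<\lambda_0<a$, and your Hopf-lemma treatment of the Neumann endpoint makes explicit what the paper leaves as ``similar arguments'' to Lemma \ref{nonexistence-lm1}(2).
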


\begin{proof}
(1) Fix $c<0$ and $0<\lambda_0<a$  with $4a-4\lambda_0<c^2$. Let
$$
L=\frac{1}{\sqrt {c^2-4a+4\lambda_0}}\ln \frac{-c+\sqrt {c^2-4a+4\lambda_0}}{-c-\sqrt {c^2-4a+4\lambda_0}}.
$$
 Then $L>0$, $\lambda_{N,D}(L)=\lambda_0$ is the principal eigenvalue of \eqref{ev-eq3},
 and  $\phi(x)$ is
a corresponding  positive eigenfunction, where
$$
\phi(x)=  - e^{\frac{-c+\sqrt {c^2-4a+4\lambda_0}}{2}x} + \frac{-c+\sqrt {c^2-4a+4\lambda_0}}{-c-\sqrt {c^2-4a+4\lambda_0}}  e^{\frac{-c-\sqrt {c^2-4a+4\lambda_0}}{2}x}.
$$
(1) then follows.

(2) It can be proved by the similar arguments as those in Lemma \ref{nonexistence-lm1}(2).
\end{proof}

\begin{proof}[Proof of Theorem D]
We first consider the case that $0\le c<2\sqrt a$. Then there is $\lambda_0>0$ such that
$$
c^2-4 a+4\lambda_0<0.
$$
By Lemma \ref{nonexistence-lm1}(1), there is $L>0$ such that  $\lambda_D(L)=\lambda_0>0$.

Fix $0\le c<2\sqrt a$ and the above $L$.  Assume that   \eqref{main-eq1} has a traveling wave solution
$(u,v)=(U(x-ct),V(x-ct))$ with $(U(-\infty)$, $V(-\infty))=(a/b,a/b)$ and
$(U(\infty),V(\infty))=(0,0)$.  Then \eqref{stationary-eq} has a stationary solution
$(u,v)=(U(x),V(x))$ with $(U(-\infty),V(-\infty))=(a/b,a/b)$ and  $(U(\infty),V(\infty))=(0,0)$.
Moreover,  for any $\epsilon>0$, this is $x_\epsilon>0$ such that
$$
0<U(x)<\epsilon, \quad 0<V(x)<\epsilon,\quad |V_x(x)|<\epsilon \quad \forall \,\, x\ge x_\epsilon.
$$

Consider the eigenvalue problem,
\begin{equation}
\label{ev-eq6}
\begin{cases}
\phi_{xx}+(c-\chi V_x(x))\phi_x+(a-\chi( V(x)-\tau cV_{x})-(b-\chi)U(x))\phi=\lambda\phi,\quad x_\epsilon<x<x_\epsilon+L\cr
\phi(x_\epsilon)=\phi(x_\epsilon+L)=0.
\end{cases}
\end{equation}
Let $\lambda_D^\epsilon(L)$ be the principal eigenvalue of \eqref{ev-eq6}.
By Lemma \ref{nonexistence-lm1}(1) and perturbation theory for principal eigenvalues of elliptic operators,  $\lambda_D^\epsilon(L)>0$ for $0<\epsilon\ll 1$.

Note that
$$
U_{xx}+(c -\chi V_{x})U_{x} + (a-\chi( V(x)-\tau cV_{x})-(b-\chi)U(x))U=0\quad \forall\,\, x_\epsilon\le x\le x_\epsilon+L
$$
and $U(x_\epsilon)>0$, $U(x_\epsilon+L)>0$. Then, by Lemma \ref{nonexistence-lm1}(2),  $\lambda_D^\epsilon(L)\le 0$. We get a contradiction.
Therefore,  \eqref{main-eq1} has no traveling wave solution
$(u,v)=(U(x-ct),V(x-ct))$ with $(U(-\infty),V(-\infty))=(a/b,a/b)$ and  $(U(\infty),V(\infty))=(0,0)$ and $0\le c<2\sqrt a$.

\smallskip
Next, we consider the case that $c<0$. Let $\lambda_0$ and $L$ be as in Lemma \ref{nonexistence-lm2}(1).
Then $\lambda_{N,D}(L)=\lambda_0>0$.

Fix $c<0$ and the above $L$.
 Assume that   \eqref{main-eq1} has a traveling wave solution
$(u,v)=(U(x-ct),V(x-ct))$ with $(U(-\infty)$, $V(-\infty))=(a/b,a/b)$ and
$(U(\infty),V(\infty))=(0,0)$.
Then \eqref{stationary-eq} has a stationary solution
$(u,v)=(U(x),V(x))$ with $(U(-\infty),V(-\infty))=(a/b,a/b)$ and  $(U(\infty),V(\infty))=(0,0)$.
Similarly, for any $\epsilon>0$, this is $x_\epsilon>0$ such that
$$
0<U(x)<\epsilon, \quad 0<V(x)<\epsilon,\quad |V_x(x)|<\epsilon \quad \forall \,\, x\ge x_\epsilon.
$$
Moreover,
since $U(\infty)=0$, there is $\tilde x_\epsilon>x_\epsilon$ such that
$$
U_x(\tilde x_\epsilon)<0.
$$

Consider the eigenvalue problem,
\begin{equation}
\label{ev-eq7}
\begin{cases}
\phi_{xx}+(c-\chi V_x(x))\phi_x+(a-\chi( V(x)-\tau cV_{x})-(b-\chi)U(x))\phi=\lambda\phi,\quad \tilde x_\epsilon<x<\tilde x_\epsilon+L\cr
\phi_x(\tilde x_\epsilon)=\phi(\tilde x_\epsilon+L)=0.
\end{cases}
\end{equation}
 Let $\lambda_{N,D}^\epsilon(L)$ be the principal eigenvalue of \eqref{ev-eq7}.
By Lemma \ref{nonexistence-lm2}(1) and perturbation theory for principal eigenvalues of elliptic operators,  $\lambda_{N,D}^\epsilon(L)>0$ for $0<\epsilon\ll 1$.

Note that
$$
U_{xx}+(c -\chi V_{x})U_{x} + (a-\chi( V(x)-\tau cV_{x})-(b-\chi)U(x))U=0\quad \forall\,\, \tilde x_\epsilon\le x\le\tilde  x_\epsilon+L
$$
and $U_x(\tilde x_\epsilon)<0$, $U(\tilde x_\epsilon+L)>0$. Then, by Lemma \ref{nonexistence-lm2}(2),  $\lambda_{N,D}^\epsilon(L)\le 0$. We get a contradiction.
Therefore,  \eqref{main-eq1} has no traveling wave solution
$(u,v)=(U(x-ct),V(x-ct))$ with $(U(-\infty),V(-\infty))=(a/b,a/b)$ and  $(U(\infty),V(\infty))=(0,0)$ and $c<0$.

Theorem D is thus proved.
\end{proof}

\end{document}